\newcommand{\stkout}[1]{\ifmmode\text{\sout{\ensuremath{#1}}}\else\sout{#1}\fi}
\definecolor{gr}{rgb}  {0.,  0.69,  0.23 }
\definecolor{bl}{rgb}  {0.,  0.5,  1. }
\definecolor{mg}{rgb}  {0.85, 0.,  0.85}
\definecolor{or}{rgb}  {0.9, 0.5,  0.}
\definecolor{webred}{rgb}{0.75,0,0}
\definecolor{webgreen}{rgb}{0,0.75,0}
\newtheorem{theorem}{Theorem}
\newtheorem{proposition}[theorem]{Proposition}
\newtheorem{lemma}[theorem]{Lemma}
\theoremstyle{definition}
\theoremstyle{remark}
\newtheorem{remark}[theorem]{Remark}
\newcommand{\Bk}{\color{black}}
\newcommand{\N}{\mathbb{N}}
\newcommand{\R}{\mathbb{R}}
\newcommand{\C}{\mathbb{C}}
\newcommand{\one}{\mathds{1}}
\newcommand{\cA}{\mathcal{A}}
\newcommand{\cB}{\mathcal{B}}
\newcommand{\cD}{\mathcal{D}}
\newcommand{\cE}{\mathcal{E}}
\newcommand{\cF}{\mathcal{F}}
\newcommand{\cL}{\mathcal{L}}
\newcommand{\cO}{\mathcal{O}}
\newcommand{\gp}{\mathfrak{p}}
\newcommand{\bel}{\begin{equation} \label}
\newcommand{\ee}{\end{equation}}
\newcommand{\ba}{\begin{array}}
\newcommand{\ea}{\end{array}}
\newcommand{\pf}{\partial_\varphi}
\newcommand\ess{\operatorname{ess}}
\newcommand\disc{\operatorname{disc}}
\newcommand\ie{\emph{i.e.,\ } }
\begin{document}\setul{2.5ex}{.25ex}
\title[]{
Eigenvalue and Resonance Asymptotics in perturbed periodically twisted tubes: Twisting versus Bending}
\author[V.\ Bruneau]{Vincent Bruneau}
\address{Universit\'e de Bordeaux, IMB, UMR 5251, 33405 TALENCE cedex, France}
\email{Vincent.Bruneau@u-bordeaux.fr}
\author[P.\ Miranda]{Pablo Miranda}\address{Departamento de Matem\'atica y Ciencia de la Computaci\'on, Universidad de Santiago de Chile, Las Sophoras 173. Santiago, Chile.}\email{pablo.miranda.r@usach.cl}
\author[D.\ Parra]{Daniel Parra}
\address{Graduate School of Mathematical Sciences, University of Tokyo, 3-8-1 Komaba, Meguroku, Tokyo,153-8914, Japan }
\email{dparra@ms.u-tokyo.ac.jp}
\author[N.\ Popoff]{Nicolas Popoff}
\address{Universit\'e de Bordeaux, IMB, UMR 5251, 33405 TALENCE cedex, France}
\email{Nicolas.Popoff@u-bordeaux.fr}

\maketitle
\begin{abstract}
We consider the Dirichlet Laplacian in a three-dimensional waveguide that is a small deformation of a periodically twisted tube. The deformation is given  by a bending and an additional twisting of the tube, both   parametrized by a coupling constant $\delta$. We expand the resolvent of the perturbed operator near the bottom of its essential spectrum and we show the existence of exactly one resonance, in the asymptotic regime of $\delta$ small. We are able to perform the asymptotic expansion of the resonance in $\delta$, which in particular  permits \Bk us to give a quantitative geometric criterion for the existence of a discrete eigenvalue below the essential spectrum. In the particular case of  perturbations of  straight tubes, \Bk we are able to show the existence of resonances not only near the bottom of the essential spectrum but near each threshold in the spectrum. We also obtain the asymptotic behavior of the resonances in this situation, which is generically different from the first case.
\end{abstract}
\noindent {\bf AMS 2010 Mathematics Subject Classification:} 35J10, 81Q10,
35P20.\\
\noindent {\bf Keywords:}
Twisted-bent waveguide, Dirichlet Laplacian, Resonances near thresholds. \\

\section{Introduction}
In this article, we consider the Dirichlet Laplacian in a three-dimensional waveguide $\Omega\subset \R^3$.  The waveguide is given by an infinite tube constructed as follows: let $\gamma:\R \to \R^3$ be a regular curve with curvature and torsion $\kappa$ and $\tau$, respectively. We assume that this curve is asymptotically straight, in the sense that $\kappa$ and $\tau$ go to 0 at infinity. Let $ \omega\subset \R^2$ be a fixed domain and construct  the following tube:  along the curve  $\gamma$  put the  cross section $\omega$ in such a way that it is orthogonal to $\dot{\gamma}(s)$ and rotated in an angle $\theta(s)$  around  the same vector. We assume that  $\theta$ is a smooth perturbation of a constant rotation, in the sense that $\dot{\theta}(s)=\beta+\varepsilon(s)$, where $\beta\in\R$, and $\varepsilon$ is a decaying function. The resulting domain is called $\Omega$ (we refer to the next section for a  precise definition), and asymptotically, it is a perturbation of a periodically twisted tube $\Omega_0$ (corresponding to $\kappa=\tau=\varepsilon=0$). We will study the Dirichlet Laplacian in $\Omega$ as a perturbation of the Dirichlet Laplacian in $\Omega_0$.  Using a  change of variables  we  study \Bk the corresponding unitary equivalent 
differential operators defined both  in the straight tube $\R\times\omega$. We denote by $H_0$ the operator corresponding to $\Omega_0$, and by $H$ the operator corresponding to $\Omega$.

An advantage of this procedure is that the operator $H_0$ can be fibered through partial Fourier transform with respect to the longitudinal variable. The spectrum of $H_0$ is absolutely continuous, and the ground state energy corresponds to a unique minimum $\mathcal{E}_1$ for the first band function (notice that very  little \Bk information is available on the band functions when $\beta\neq0$, see \cite{BriKovRaiSoc09}). The geometric perturbations considered here do not modify the essential spectrum i.e., the essential spectrum of the Laplacian in $\Omega$ is the same as the essential spectrum of the Laplacian in $\Omega_0$ (or $\sigma_{\rm{ess}}(H_0)=\sigma_{\rm{ess}}(H)$). However, discrete eigenvalues can appear. To know whether the geometric perturbation of a waveguide creates eigenvalues \Bk below the essential spectrum is a widely studied problem, corresponding to the appearance of trapped modes for a quantum particle moving in the tube. Let us cite \cite{ExKov15} for an overview of this problem.

For the case $\beta=0$, it is well known that the effective twisting, given by $\dot{\theta}-\tau$, and the bending, given by $\kappa$, compete together in this question: if $\kappa\neq0$ and the twisting effect is zero (i.e., $\tau-\dot{\theta}=0$), eigenvalues below the essential spectrum appear (\cite{DuEx95,CheDuFrei05}). This is also the case if the bending is strong enough. In contrast, the spectrum is unchanged in the case of pure twisting, which is related to the existence of a Hardy inequality provided by the twisting (\cite{EkKovKre08}). Furthermore, if the bending is small compared to the twisting, there is no discrete spectrum (for a comparison of both effects quantitatively in an asymptotic regime see \cite{Gru05}).

The case $\beta\neq0$ has also been studied before. Assume for instance $\beta>0$, and $\kappa=0$, then it is known that a non zero slowing twist (\ie $\int\varepsilon<0$) will create discrete spectrum (\cite{ExKov05, BriKovRaiSoc09}), but a small enhancement of the twisting ($\varepsilon\geq0$) should  not change the spectrum (see \cite{BrHaKr15} for a partial result). However, to the best of our knowledge, there is no study of the spectrum when both $\kappa\neq0$ and $\beta\neq0$.

This analysis raises the following  issues\Bk:
 
\begin{itemize}
\item Is it possible to have a quantitative  criterion \Bk to ensure the presence of a discrete eigenvalue below the essential spectrum?
\item When there are no eigenvalues, does the perturbation create resonances near the bottom of the spectrum? 
\item What happens near the upper thresholds in the spectrum?
\end{itemize}
Our approach to these questions is perturbative: for a parameter $\delta>0$, we will consider the same problem as above but replacing $\kappa$, $\tau$ and $\varepsilon$ by  $\delta \kappa$, $\delta\tau$ and $\delta\varepsilon$. We denote by $H_{\delta}$ the resulting operator. Then we will first study what happens near $\mathcal{E}_1$ when $\delta\to0$. This approach was already used in \cite{Gru04, Gru05}, but these works include only the case $\beta=0$. Moreover, they are concerned only with the existence of eigenvalues, and the study of the resonances is not treated. We have tackled these questions in the previous article \cite{BruMirPof18} when both $\beta$ and $\kappa$ are $0$ (i.e. when $\Omega_0=\R\times\omega$ and the deformation is only given by a twisting effect).

In this article, we continue this work with a more general treatment by considering the whole set of geometric perturbations described above. To do this we need to extend the resolvent $(H_\delta-z)^{-1}$, acting in weighted spaces, for $z$ near $\mathcal{E}_1$ on a two-sheeted Riemann surface. Then we show that for $\delta$ small enough there is exactly one pole of the resolvent near the bottom of the spectrum.  Our method also permits us to obtain the asymptotic expansion of this pole as $\delta\to0$, and therefore to deduce criteria for which it is a discrete eigenvalue or a resonance on the second sheet. When $\beta\neq0$, the distance from this pole to  $\mathcal{E}_1$ is in general of order $\delta$ in the variable $k = \sqrt{\mathcal{E}_1- z}$,  see Theorem \ref{T1} for the expression of the asymptotics. Moreover, when $\beta\neq0$, due to an interaction with the constant twisting, the presence of the bending does not necessarily contribute to the discrete spectrum, see the term in $\kappa$ in \eqref{12sep18} and Section \ref{comments}. This effect can be compared to the interaction of a bending with a mixed boundary condition, see \cite{BakEx18} for a recent result on this point, where the  existence 
of bending does not automatically create eigenvalues below the essential spectrum. 

On the other side, despite the fact that  various works on the existence of bound states for deformed tubes are available, the results are mainly concerned with their existence below the bottom of the  essential spectrum of $H_0$. The spectral effects that the geometric deformations have near the upper thresholds are not well studied (see however \cite{BruMirPof18}). Using our methods we can prove some results concerning this problem. In particular, in the case $\beta=0$ we know that the band functions of the operator $H_0$ are of the form $p\mapsto \cE_n+p^2$, for some $\cE_n\in \R, n\in\N$. Therefore the only thresholds in the spectrum of $H_0$ are these $\cE_n$'s. In Theorem \ref{T2} we basically prove that near each $\cE_n$, for any $n\in \N$, there exists a resonance. We also obtain the asymptotic behavior of this pole in terms of $\delta$, where this time the main term is not linear but quadratic.

 As in \cite{BonBruRai07,BruMirPof18}, our strategy is to use an explicit decomposition of the resolvent of the free operator. However, unlike these articles, the Fourier multipliers of $H_0$ are not explicit when $\beta\neq0$, and so we need to start by studying the resolvent of $H_0$. Here we will use the fact that first band function has a unique non-degenerate minimum, see \cite{BriKovRaiSoc09}. Our strategy adapts near other non-degenerate critical points of the band functions (see Remark \ref{upper_minima}), and probably to more general fibered operators.

The paper is organized as follows. In section \ref{S0} we give the basic definitions while in section \ref{results} we present the main theorems. Then, in section \ref{S1} we begin by studying the meromorphic extensions of the resolvents, both of the unperturbed operator $H_0$ and of the perturbed operator $H_\delta$. Next, we prove Theorem \ref{T1} and Theorem \ref{T2}, in sections \ref{S3} and \ref{S5}, respectively. In section \ref{comments} we consider specific geometries for which we comment and refine our results. In the Appendix \ref{ap} we include some computations, such as an explicit expression for the perturbation $H-H_0$ in term of differential operators.
 
\section{Deformed tube and the Hamiltonian}\label{S0}

Let $\gamma:\R\to\R^3$ be a  smooth  curve 
parametrized by its arc length. Assume  that this curve is equipped with a distinct frame $\{{\bf e_1}(s),{\bf e_2}(s),{\bf e_3}(s)\}\subset\R^3$ giving an orthogonal reference frame at each point $\gamma(s)$ of the curve. If the curvature $$\kappa(s):=||\ddot{\gamma}(s)||$$ is different from zero for all $s\in\R$ it is known that the frame  can be chosen such that it satisfies
$${\bf e_1}(s)=\dot{\gamma}(s),\quad {\bf e_2}(s)=\kappa(s)^{-1}\dot{{\bf e}}_{\bf1}(s),\quad{\bf e_3}(s)={\bf e_1}(s)\times{\bf e_2}(s).$$
This is  called the Frenet frame. If $\kappa$ has compact support or vanishes one can still make such choice meaningful, see \cite{EkKovKre08} for a discussion on how to extend the frame and \cite{KS08} for a different approach when the Frenet frame is not available.

Now, let $\theta:\R\to \R$ be a function with bounded derivative, and define the ``general'' frame $\{{\bf e_1^\theta},{\bf e_2^\theta},{\bf e_3^\theta}\}$ which is obtained by rotating the vectors ${\bf e_2}(s),{\bf e_3}(s)$ in an angle $\theta(s)$ around ${\bf e_1}(s)$. It is possible to see that these new vectors satisfy the differential equation 
$$\displaystyle{\frac{d}{ds}}\left(\ba{c}{\bf e_1^\theta}(s) \\{\bf e_2^\theta}(s)\\ {\bf e_3^\theta}(s) \ea \right)=\left(\ba{ccc}0&\kappa(s)\cos\theta(s)&\kappa(s)\sin\theta(s)\\
-\kappa(s)\cos\theta(s)&0&(\tau(s)-\dot{\theta}(s))\\
-\kappa(s)\sin\theta(s)&-(\tau(s)-\dot{\theta}(s))&0
\ea\right)\left(\ba{c}{\bf e_1^\theta}(s) \\{\bf e_2^\theta}(s)\\ {\bf e_3^\theta}(s) \ea \right).$$
Note that ${\bf e_1^\theta}={\bf e_1^0}={\bf e_1}$ and the torsion $\tau$ of the curve is actually defined by the last equation when $\theta\equiv0$.

 Let $\omega$ be a bounded connected region in $\R^2$,  with $C^2$ boundary.  We use the coordinates $t=(t_2,t_3)$ for this region, and define the function 
$\mathcal{L}:\R\times\omega\to \R^3$ given by 
\bel{3}\mathcal{L}(s,t_2,t_3)=\gamma(s)+t_2{\bf e_2^\theta}(s)+t_3{\bf e_3^\theta}(s).\ee

Then we define our twisted-bent tube as the image $\Omega:={\mathcal L}(\R\times \omega)$. As usual, we say that the tube $\Omega$: 
\begin{equation}\label{2}\ba{ll} \mbox{is {bent}} & \mbox{if} \,\, \kappa \,\,\mbox{ is not identically zero};\\
\mbox{is twisted} &\mbox{if} \, \,\tau-\dot{\theta} \,\,\mbox{ is not identically zero}.\ea\ee

Assume that $\cL$ is injective and $||\kappa||_{L^\infty}\sup_{t\in\omega}|t|<1$. This implies that $\cL$ is a global diffeomorphism between the straight tube $\R\times\omega$ and $\Omega$ \cite{EkKovKre08}.

In this article we study a particular type of twisting, which is given by $\dot{\theta}(s)=\beta+\varepsilon(s)$, where $\beta\in\R$ and $\varepsilon$ decay to infinity. Notice that if $\beta\neq0$ and $\tau=\varepsilon\equiv0$ then the tube is periodically twisted. 

\subsection{The Hamiltonian}

 We will study the operator $-\Delta_\Omega$ acting in $L^2(\Omega)$, with Dirichlet boundary condition, i.e., defined through the closed quadratic form 
$$Q[u]=\int_\Omega |\nabla u|^2, \quad u\in\rm{H}_0^1(\Omega).$$ 
The domain of this operator is $\rm{H}^2(\Omega)\cap\rm{H}_0^1(\Omega)$, which follows adapting the proof of \cite[Proposition 2.1]{BriKovRai14}.
 The differential expression of this operator in the global coordinates defined above can be obtained as follows. First, recall that $\cL$ is a diffeomorphism and define $G=(G_{ij})$, as the metric tensor induced by $\cL$, \emph{i.e.,\ } $G_{ij}=(\partial_i\cL)\cdot(\partial_j\cL)$, where for brevity, we sometimes adopt the notations $\partial_1=\partial_s$, $\partial_2=\partial_{t_2}$, $\partial_3=\partial_{t_3}$. Then, following \cite{Kre07} we can set 
$$\ba{ll}h(s,t)&= 1-\kappa(s)\big(t_2\cos\theta(s)+t_3\sin\theta(s)\big)\\
h_2(s,t)&=-t_3\big(\tau(s)-\dot{\theta}(s)\big)\\[.4em]
h_3(s,t)&=t_2\big(\tau(s)-\dot{\theta}(s)\big),\ea$$ 
so  $G$ is explicitly given by
$$G=(G_{jk})=\left(\ba{ccc}h^2+h_2^2+h_3^2&h_2&h_3\\h_2&1&0\\h_3&0&1\ea\right),$$ and $h=\sqrt{\det G}$ (hence $L^2(\Omega)\cong L^2(\R\times\omega, h )$). Also, the operator $-\Delta_\Omega$ is defined in the global coordinates $(s,t)$ acting by 
$$\frac{1}{h}\sum_{j,k=1}^3{\partial_j}hG^{jk}\partial_k,$$
where as usual $(G^{jk}):=G^{-1}$. \ 

Furthermore, it is convenient to study the operator $-\Delta_\Omega$, after a unitary transformation, in $L^2(\R\times\omega)$. Hence we define $U:L^2(\Omega)\to L^2(\R\times \omega)$ by
$$U(f)=(f\circ\cL) h^{1/2}.$$
The self-adjoint operator $U(-\Delta_\Omega) U^*$
 acts on its domain, $D(H)=\rm{H}^2(\R\times\omega)\cap\rm{H}_0^1(\R\times\omega)$, by \bel{7sep18}
-\partial_2^2-\partial_3^2-\frac{\kappa^2}{4h^2}-\left(h^{-1/2}(\partial_s+(\tau-\dot{\theta})\partial_\varphi) h^{-1/2} \right)^2,\ee
where $\partial_\varphi=t_2\partial_3-t_3\partial_2$. We will denote the operator \eqref{7sep18} by $H$. 

\subsection{Reference operator}
 In \eqref{7sep18} consider the operator corresponding to bending zero and periodic twisting, that is, define the operator in $D(H)$ acting by 
 \bel{7sep18a}{H}_0=
 -\partial_2^2-\partial_3^2-\left(\partial_s - \beta\partial_\varphi\right)^2.\ee
 Using the partial Fourier transform with respect to the variable $s$, denoted by $\mathcal{F}$, we can see that $H_0$ is unitary equivalent to an analytically fibered operator: 
 \bel{jan1}\cF {H}_0\cF^{-1}=\int^\oplus_\R {\mathfrak h}_0 (p)dp,\ee
 where for any $p\in\R$
 $${\mathfrak h}_0(p)=-\Delta_\omega-( ip - \beta\partial_\varphi )^2$$
 is an operator acting in $\rm{H}^2(\omega)\cap\rm{H}_0^1(\omega)$. The family $\{{\mathfrak h}_0(p)\}$ is analytic of type A \cite[Lemma 5.1]{BriKovRai14}. Moreover, each operator ${\mathfrak h}_0(p)$ has purely discrete spectrum. Denote by $\{E_n(p)\}$ the increasing sequence of eigenvalues of ${\mathfrak h}_0(p)$. We can see that the spectrum of $H_0$ is purely absolutely continuous and given by 
 $$\sigma(H_0)=\bigcup_{n=1}^\infty E_n(\R)=[\mathcal{E}_1,\infty),$$
 where for $n\geq1$ \bel{eigen}{\mathcal{E}_n:=\min_{p\in\R}E_n(p).}\ee 
In particular  $\mathcal{E}_1$ is the first eigenvalue of ${\mathfrak h}_0 (0)= -\Delta_\omega-\beta^2 \partial_\varphi^2$, and the existence of an effective mass has been proved in \cite{BriKovRaiSoc09}, so we have
 \bel{1bis}E_1(p)=\mathcal{E}_1+m_\beta p^2(1+O(p)),  \qquad \mbox{as }  p \rightarrow 0,  \ee
with $0 <m_\beta\leq 1$, the equality holding (i.e., without the remainder term) if $\beta=0$, with $m_0=1$.

\subsection{Stability of the essential spectrum}

Suppose that $\varepsilon, \kappa, \tau: \R\to \R$ are non-zero functions of class $C^2$ with exponential decay i.e., for some $\alpha>0$
they satisfy
\bel{hypeps}
\kappa(s), \tau(s), \varepsilon(s)=O(e^{-\alpha\langle s\rangle}),
\ee
where $\langle s\rangle:=(1+s^2)^{1/2}$, and the same assumption holds for their first and second derivatives. Straightforward computations show that 
\bel{29nov18}H-H_0\equiv W= f_{0,0}+f_{1,0}\partial_s+f_{0,1}\partial_\varphi+f_{1,1}\partial_s\partial_\varphi+f_{2,0}\partial_s^2+f_{0,2}\partial_\varphi^2,\ee
 where $ f_{i,j}\in C^0(\R\times\omega)$ satisfy 
\bel{decayf}
f_{i,j}(s,t)=O(e^{-\alpha\langle s\rangle}),
\ee
 uniformly with respect to $(s,t) \in \R\times\omega$. 
 The full expression of the $f_{i,j}$ is provided in the Appendix \ref{ap1}. This decomposition allows us to repeat the argument of \cite[Section 4.1]{BriKovRaiSoc09} in order to conclude that $H^{-1}-H_{0}^{-1}$ is compact, and therefore $H$ and $H_{0}$ have the same essential spectrum, that is $$\sigma_{\ess}(H)=[\mathcal{E}_1,\infty).$$

\section{Main results}\label{results}

We want to make our deformed tube depending on a coupling constant. For $\delta\in [0,1]$ consider
$$\dot{\theta}_\delta=\beta+\delta\varepsilon, \quad \kappa_\delta=\delta\kappa \quad \mbox{ and } \quad \tau_\delta=\delta\tau.$$
We define in $D(H)$ the operator given by particularizing \eqref{7sep18} to $\theta_\delta,\kappa_\delta$ and $\tau_\delta$:
\bel{29nov18bis}
H_\delta=-\partial_2^2-\partial_3^2-\frac{\delta^2\kappa^2}{4h_\delta^2}-\left(h_\delta^{-1/2}(\partial_s+\left(\delta(\tau-\varepsilon)-\beta\right)\partial_\varphi)h_\delta^{-1/2}\right)^2,\ee
where $h_{\delta}(s,t)=1-\delta\kappa(s)\big(t_2\cos\theta_\delta(s)+t_3\sin\theta_{\delta}(s)\big)$. Thus, $\delta$ encodes the turning on of the perturbation given by $\theta_\delta,\kappa_\delta$ and $\tau_\delta$. It is clear that $H_1=H$ and the notation is coherent for $H_0$. Note also that by the previous comments we have $\sigma_{\ess}(H_\delta)=[\mathcal{E}_1,\infty)$, and the discrete eigenvalues of $H_\delta$ below $\cE_1$, are the poles of
\bel{6mars19}
z \mapsto \left((H_\delta- z)^{-1}: L^{2}(\R\times\omega)\to D(H)\subset  L^{2}(\R\times\omega)\right)
\ee
in $\C \setminus [\cE_1, + \infty)$. 

In order to define resonances and to state our first main result, let us settle some notations. Let $\eta$ be an exponential weight of the form 
\bel{E:defeta}
\eta(s)=e^{-N \left\langle s\right \rangle} \ \ \mbox{with} \ \ 0<N<\frac\alpha2.
\ee Set $\C^+:= \{k \in \C; \; {\rm Im \,} k >0 \}$ and $\C^{++}:= \{k \in \C; \; {\rm Im \,} k >0; \,\,{\rm Re \,} k >0 \}$. Obviously, taking $z= \cE_{1} + k^2$ 
in \eqref{6mars19}, we obtain that the function
$$ k\mapsto \left((H_\delta-\cE_{1}-k^2)^{-1}:\eta L^{2}(\R\times\omega)\to\eta^{-1}D(H)\subset\eta^{-1} L^{2}(\R\times\omega)\right)$$
is meromorphic on $\C^+$, with poles at $k=i \sqrt{\cE_{1} - \mu} \in i \R^+$, $\mu \in \sigma_{\disc}(H_\delta)$. Moreover, thanks to the weight $\eta$ this operator valued function admits a meromorphic extension to a neighborhood of $k=0$:

\begin{theorem}
\label{T1}Let $\varepsilon, \kappa, \tau: \R\to \R$ be non-zero $C^2$-functions satisfying \eqref{hypeps} and fix a sufficiently small neighborhood of zero $\mathcal{D}$ {in $\C$}.
Then, there exists $\delta_0>0$ such that for $\delta\leq\delta_0$, the analytic operator-valued function 
$$\C^{++}  \ni k \mapsto  \left((H_\delta-\cE_{1}-k^2)^{-1}:\eta L^{2}(\R\times\omega)\to\eta^{-1} D(H)\right)$$ 
admits a meromorphic extension on $\mathcal{D}$. This function has a unique pole $k(\delta)$ in $\cD$, which has multiplicity one and satisfies \bel{2_7_18} 
k(\delta)= i \mu_1\delta +O(\delta^2),\ \ \mu_1\in\R .\ee Moreover, if $\psi_1$ is the normalized function satisfying $
 {\mathfrak h}_0(0)\Bk\psi_1 = \cE_1 \psi_1$, setting 
\bel{D:F}
 F(s)=\int_\omega\left(  \vert \partial_\varphi  \psi_1(t)\vert^2+\frac14 \vert \psi_1(t)\vert^2\right)\left(t_2\cos(\beta s)+t_3\sin(\beta s)\right)d t,
\ee
we have
\bel{12sep18}\mu_1=\frac{\beta||{\partial_\varphi\psi_1}||^2}{\sqrt{m_{\beta} }}
 \int_{\R} ( \tau-\varepsilon)(s) d s-\frac{\beta^2}{\sqrt{m_{\beta} }}\int_{\R} \kappa (s)F(s) d s  .\ee
Further, the pole $k(\delta)$ is a purely imaginary number. \end{theorem}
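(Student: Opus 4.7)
The plan is to first obtain an explicit Laurent decomposition of the weighted free resolvent $G_0(k):=\eta(H_0-\cE_1-k^2)^{-1}\eta$ around $k=0$ using the fibered decomposition \eqref{jan1}. A smooth cutoff around $p=0$ separates the singular contribution (from the first band) from the higher bands, which are analytic in $z$ near $\cE_1$ since $E_n(p)\ge \cE_1+c_n$ for $n\ge 2$. For the first band, using the non-degenerate minimum \eqref{1bis} and closing the $p$-contour in the upper half-plane, one extracts the residue at $p=k/\sqrt{m_\beta}$; the resulting kernel carries the factor $e^{ik|s-s'|/\sqrt{m_\beta}}/(2k\sqrt{m_\beta})$ against $\psi_1(t)\overline{\psi_1(t')}$, and because this exponential is entire in $k$ while the weight $\eta$ with $N<\alpha/2$ absorbs the polynomial growth in $|s-s'|$, I would obtain
\begin{equation*}
G_0(k)=\frac{i}{2k\sqrt{m_\beta}}\,|\phi\rangle\langle\phi|+\mathcal{R}(k),\qquad \phi(s,t):=\eta(s)\psi_1(t),
\end{equation*}
with $\mathcal{R}(k)$ analytic in a disk $\cD$ around the origin.

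Next, set $\cW_\delta:=\eta^{-1}W_\delta\eta^{-1}$; by \eqref{decayf} this is a differential operator with coefficients of order $O(e^{(2N-\alpha)\langle s\rangle})$, so $G_0(k)\cW_\delta$ is compact and depends analytically on $k\in\cD$. The second resolvent identity gives $G_\delta(k)=(I+G_0(k)\cW_\delta)^{-1}G_0(k)$, so the poles of $G_\delta$ correspond to non-invertibility of $I+G_0(k)\cW_\delta$. For $\delta$ small, $I+\mathcal{R}(k)\cW_\delta$ is invertible, and a Schur complement (Sherman--Morrison) computation against the rank-one singular part reduces the pole condition to the analytic scalar equation
\begin{equation*}
F(k,\delta):=2k\sqrt{m_\beta}+i\,\bigl\langle\cW_\delta(I+\mathcal{R}(k)\cW_\delta)^{-1}\phi,\phi\bigr\rangle=0.
\end{equation*}
Since $F(0,0)=0$ and $\partial_k F(0,0)=2\sqrt{m_\beta}\ne 0$, the analytic implicit function theorem produces a unique simple zero $k(\delta)$ in $\cD$ for $\delta$ small, and the simplicity of this zero gives the multiplicity-one statement for the pole of $G_\delta$.

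For the first-order asymptotics, differentiating $F$ yields $k(\delta)=-\tfrac{i}{2\sqrt{m_\beta}}\,\langle\partial_\delta W_\delta|_{\delta=0}\,\psi_1,\psi_1\rangle_{L^2(\R\times\omega)}\,\delta+O(\delta^2)$. Expanding \eqref{29nov18bis} to first order in $\delta$ (using $h_\delta=1-\delta\kappa(t_2\cos(\beta s)+t_3\sin(\beta s))+O(\delta^2)$ and $\dot\theta_\delta=\beta+\delta\varepsilon$) produces $\partial_\delta W_\delta|_{\delta=0}=-(DY+YD)$ with $D=\partial_s-\beta\partial_\varphi$ and an explicit $Y$ built from $a(s,t):=\kappa(s)(t_2\cos(\beta s)+t_3\sin(\beta s))$ and $(\tau-\varepsilon)\partial_\varphi$. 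Since $\psi_1$ depends only on $t$, every $\partial_s$ kills it; integration by parts on $\omega$ (the Dirichlet condition $\psi_1|_{\partial\omega}=0$ kills all $\partial_\varphi$-boundary terms) combined with $\int_\R(\tau-\varepsilon)'(s)\,ds=0$ eliminates the total-derivative contributions, and the surviving twisting and bending pieces reduce, after identifying the coefficients of $\cos(\beta s),\sin(\beta s)$ against $t_2,t_3$, to the two integrals in \eqref{12sep18}. Finally, since $H_\delta$ has real coefficients, complex conjugation commutes with $H_\delta$, giving the symmetry $\overline{G_\delta(k)}=G_\delta(-\bar k)$ on the extended meromorphic function; the pole set is therefore invariant under $k\mapsto -\bar k$, and uniqueness of $k(\delta)$ in $\cD$ forces $k(\delta)=-\overline{k(\delta)}\in i\R$.

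The main obstacle is the Laurent decomposition of $G_0(k)$ with a \emph{genuinely analytic} remainder $\mathcal{R}(k)$: it requires a careful residue/stationary-phase analysis of the $p$-integral that controls both the cubic correction $O(p^3)$ in $E_1(p)-\cE_1$ and the $p$-dependence of the fiber projection $P_1(p)$, and it uses essentially the non-degeneracy of the minimum $\cE_1$ of \cite{BriKovRaiSoc09}. Once this analytic decomposition is in place, the rest is essentially algebraic.
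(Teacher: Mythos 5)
Your proposal is correct and follows essentially the same route as the paper: the rank-one Laurent decomposition $G_0(k)=\frac{i}{2k\sqrt{m_\beta}}|\eta\otimes\psi_1\rangle\langle\eta\otimes\psi_1|+\mathcal R(k)$ (obtained there by writing $\sqrt{E_1(p)-\cE_1}=p\,d(p)$ with $d$ analytic and deforming the $p$-contour, which is exactly the careful residue analysis you flag as the main obstacle), the resolvent identity plus rank-one (Sherman--Morrison) inversion reducing the pole condition to a scalar analytic equation, and first-order perturbation theory $\mu_1\propto\langle\psi_1|\partial_\delta W_\delta|_{\delta=0}\psi_1\rangle$ evaluated exactly as in the paper's Appendix. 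The only genuine divergence is the purely-imaginary claim: you invoke the global conjugation symmetry $G_\delta(k)=C\,G_\delta(-\bar k)\,C$ together with uniqueness of the pole, whereas the paper proves directly that $F(i\alpha)$ is self-adjoint (using the parity $E_1(-p)=E_1(p)$ and $\Psi_1(\cdot,-p)=\overline{\Psi_1(\cdot,p)}$, i.e.\ the same reality structure) so that $i\nu_\delta(i\alpha)\in\R$, and then locates the root on $i\R$ by an intermediate-value argument; your version is cleaner and equally valid, as is your use of the implicit function theorem in place of Rouch\'e.
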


 With respect to the variable $z$, the previous result means that $$z \mapsto \left((H_\delta- z)^{-1}:  \eta L^{2}(\R\times\omega)\to\eta^{-1} L^{2}(\R\times\omega)\right)$$ admits a meromorphic extension in a neighborhood of $z=\cE_1$ on a $2$-sheeted Riemann surface where the function $z \mapsto \sqrt{z-\cE_1}$, 
 $\sqrt{-1}=i$,  admits an analytic extension. 

\begin{remark}
In the following,  since $k(\delta)$ is purely imaginary, for $\delta$ small it will  be \Bk important to consider the sign of $\mu_1$. If it is positive, then $\cE_1 + k(\delta)^2\in \cE_1 + \R_-$ corresponds to an eigenvalue of $H_\delta$ under the bottom of the essential spectrum. On the other side, if  $\mu_1$ is negative then the resonance  actually lies in the second sheet of the Riemann surface. Such resonance is sometimes called  {\it antibound state} (see \cite[Chap. XI.8.F]{RS3}).
\end{remark}

\begin{remark}\label{upper_minima}
Suppose that for $p_*\in\R$, the number $E_n(p_*)=\cE_*$ is a nondegenerate minimum of the band function $E_n$, such that $\cE_*$ has multiplicity $m_*$
as eigenvalue of the operator $- \Delta_\omega -(ip_* - \beta \pf)^2$,  and such that $\cE_*$ is not a local minimum for all $E_m$ with $m\neq n$. Then, using the same ideas of the proof of the Theorem \ref{T1}, it is possible to  show that there exists a neighborhood of $\cE_*$ such that inside this neighborhood there are at most $m_*$ resonances, and the dependence on $\delta$ is of the form \eqref{2_7_18}. \end{remark} 

The ideas of Remark \ref{upper_minima} can be made more explicit in the case $\beta=0$. We decided to present this case in detail for the following reasons: the asymptotic behavior is generically different from the case $\beta\neq0$; the existence of discrete eigenvalues for asymptotically straight tubes has been studied many times before since \cite{DuEx95}; 
despite this fact there are few results concerning  resonances or the upper thresholds in the spectrum of $\sigma(H_0)$. 

To begin with, notice that for $\beta=0$ the operator $H_0$ is just the Laplacian defined in the straight tube, i.e., \bel{21jan19e}H_0=-\Delta_{\R\times\omega}=D^2_s\otimes I_t-I_s\otimes\Delta_\omega,\ee
where we are  using the notation $D_s=-i\partial_s$. Let $\cE_n$, $n\geq1$, be the increasing sequence of eigenvalues of  $-\Delta_\omega$, counted with multiplicity\footnote{This notation is in agreement with the notation introduced in \eqref{eigen}.}. \Bk  An important difference with the case $\beta\neq0$ is that this time, it is easy to see that the set of  thresholds in the spectrum of $\sigma(H_0)$ correspond precisely with $\{\cE_n\}_{n=1}^\infty$. 
Also, let  $\{\psi_n\}_{n=1}^\infty$  be an  orthonormal basis of eigenvectors of $-\Delta_\omega$  in $L^2(\omega)$.

Our next theorem is the corresponding of Theorem \ref{T1} for $\beta=0$ and, generically, any threshold $\cE_n$. 

\begin{theorem}
\label{T2}Let $\varepsilon, \kappa, \tau: \R\to \R$ be non-zero $C^2$-functions satisfying \eqref{hypeps} and fix a sufficiently small neighborhood of zero $\mathcal{D}$ in $\C$. Suppose $\cE_n$ is a non-degenerate eigenvalue of $-\Delta_\omega$. Then, there exists $\delta_0>0$ such that for $\delta\leq\delta_0$, the analytic operator-valued function $$ k\mapsto \left((H_\delta-\cE_{n}-k^2)^{-1}:\eta L^{2}(\R\times\omega)\to\eta^{-1} L^{2}(\R\times\omega)\right)$$ 
admits a meromorphic extension on $\mathcal{D}$. This function has a unique pole $k_n(\delta)$ in $\cD$, which has multiplicity one. Moreover \bel{19dic18} 
k(\delta)= i \mu_{2,n}\delta^2 +O(\delta^3),\ee
where 
\bel{19dic18a}\ba{lll}\mu_{2,n}&=&\displaystyle{\frac{ 1}{8}\sum_{q\neq n}(\mathcal{E}_{q}-\mathcal{E}_{n})^2 \langle\psi_{q}|t_2\psi_{n}\rangle^2 \langle{\kappa}| (D_{s}^2+\mathcal{E}_{q}-\mathcal{E}_{n})^{-1}{\kappa}\rangle}
\\ &&\displaystyle{-\frac{ 1}{2}\sum_{q\neq n}(\mathcal{E}_{q}-\mathcal{E}_{n}) \langle\psi_{q}|\pf\psi_{n}\rangle^2 \langle (\tau-\varepsilon)|(D_{s}^2+\mathcal{E}_{q}-\mathcal{E}_{n})^{-1}(\tau-\varepsilon)\rangle}\\
&&\displaystyle{-\frac{ 1}{2}\sum_{q\neq n}(\mathcal{E}_{q}-\mathcal{E}_{n}) \langle\psi_{q}|\partial_{\varphi}\psi_{n}\rangle\langle\psi_{q}|t_2\psi_{n}\rangle \langle\tau-\varepsilon| (D_{s}^2+\mathcal{E}_{q}-\mathcal{E}_{n})^{-1}\dot{\kappa}\rangle}.\ea\ee
\end{theorem}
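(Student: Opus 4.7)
The plan is to adapt the meromorphic extension and Feshbach-type reduction used in Theorem~\ref{T1} to the simpler tensor-product setting $\beta=0$, and then push the $\delta$-expansion to second order. Using \eqref{21jan19e}, the free resolvent has the explicit spectral expansion
$$(H_0-\cE_n-k^2)^{-1}=\sum_{q\geq 1}|\psi_q\rangle\langle\psi_q|\otimes (D_s^2+\cE_q-\cE_n-k^2)^{-1}.$$
The summand $q=n$ has integral kernel $\frac{e^{\ri k|s-s'|}}{2\ri k}$; conjugated by the weight $\eta$ from \eqref{E:defeta}, it extends meromorphically to $\cD$ with a simple pole at $k=0$ whose residue is the rank-one operator $\frac{1}{2\ri}|\eta\otimes\psi_n\rangle\langle\eta\otimes\psi_n|$ (here $1\in\eta^{-1}L^2$ since $\int\eta^2<\infty$). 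Under the non-degeneracy assumption, $\cE_q-\cE_n\ne 0$ for $q\ne n$, and since $N<\alpha/2$ a Combes--Thomas estimate makes the remaining summands jointly analytic near $k=0$ as operators between the weighted spaces.

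Next, I set up a Feshbach--Schur reduction of the pole equation $(I+R_0(k)W_\delta)u=0$ in weighted spaces, compactness being provided by the exponential decay of the coefficients of $W_\delta$ (cf.~\eqref{decayf}) and the non-singular character of the differential orders. Splitting $R_0(k)=\frac{1}{2\ri k}|\eta\otimes\psi_n\rangle\langle\eta\otimes\psi_n|+R_0^{\mathrm{reg}}(k)$ and taking the Schur complement with respect to the rank-one singular direction, the pole equation collapses to a scalar analytic equation
$$2\ri k=\Phi(\delta,k),$$
with $\Phi$ jointly analytic on $[0,\delta_0)\times\cD$. The analytic implicit function theorem then produces the unique simple solution $k_n(\delta)$ with $k_n(0)=0$.

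To extract the asymptotics, expand $W_\delta=\delta W^{(1)}+\delta^2 W^{(2)}+O(\delta^3)$ from \eqref{29nov18bis}. Since $\beta=0$ one has $\theta_\delta=O(\delta)$, hence $\cos\theta_\delta=1+O(\delta^2)$, $\sin\theta_\delta=O(\delta)$, and $h_\delta^{-1/2}=1+\tfrac{\delta}{2}\kappa t_2+O(\delta^2)$, giving
$$W^{(1)}=-\bigl[2\kappa t_2\,\partial_s^2+2\dot\kappa\, t_2\,\partial_s+\tfrac12\ddot\kappa\, t_2+2(\tau-\varepsilon)\,\pf\partial_s+(\tau-\varepsilon)'\pf\bigr].$$
The crucial cancellation at order $\delta$ is that the matrix element $\langle\psi_n\otimes 1, W^{(1)}(\psi_n\otimes 1)\rangle$ driving the leading Schur term vanishes: the twisting parts carry the antisymmetric factor $\pf$, so $\langle\psi_n,\pf\psi_n\rangle=0$; and the bending part, applied to the $s$-constant $1$, collapses to $-\tfrac12\langle\psi_n,t_2\psi_n\rangle\int_\R\ddot\kappa\,ds=0$ as a boundary integral since $\dot\kappa$ decays at infinity. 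Consequently $\Phi(\delta,0)=O(\delta^2)$, forcing $k_n(\delta)=O(\delta^2)$.

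The coefficient $\mu_{2,n}$ is then obtained from the two $\delta^2$-contributions to $\Phi(\delta,0)$: the diagonal matrix element of $W^{(2)}$, and the non-resonant second-order perturbation-theoretic contribution
$$-\sum_{q\ne n}\bigl\langle\psi_n\otimes 1,\,W^{(1)}\bigl(|\psi_q\rangle\langle\psi_q|\otimes (D_s^2+\cE_q-\cE_n)^{-1}\bigr)W^{(1)}(\psi_n\otimes 1)\bigr\rangle.$$
Substituting $W^{(1)}$, using the commutator identity $[-\Delta_\omega,t_j]=-2\partial_{t_j}$ --- which gives $\langle\psi_q,\partial_{t_j}\psi_n\rangle=-\tfrac12(\cE_q-\cE_n)\langle\psi_q,t_j\psi_n\rangle$ --- and integrating by parts in $s$ to absorb the $\dot\kappa$- and $(\tau-\varepsilon)'$-factors, the various pieces regroup into the three sums of \eqref{19dic18a}: a pure bending term quadratic in $\kappa$ (with prefactor $(\cE_q-\cE_n)^2$ from two applications of the commutator identity), a pure twisting term quadratic in $\tau-\varepsilon$, and a bending/twisting cross term. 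The main obstacle is precisely this bookkeeping: producing $W^{(1)}$ and $W^{(2)}$ correctly (with the higher $\delta$-corrections to $\theta_\delta$ and the Weingarten-type term $\delta^2\kappa^2/(4h_\delta^2)$), tracking cancellations between the direct $W^{(2)}$-diagonal and the second-order series, and carrying out the algebraic simplifications leading to the compact form \eqref{19dic18a}. The remaining ingredients --- meromorphic extension, Schur reduction, implicit function theorem --- follow the pattern of Theorem~\ref{T1} with simplifications from the explicit tensor structure of $H_0$.
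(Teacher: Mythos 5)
Your overall strategy coincides with the paper's: the explicit tensor decomposition \eqref{21jan19} of the free resolvent, isolation of the singular $q=n$ channel, a rank-one (Feshbach/Schur) reduction to a scalar equation of the form $k+\nu_{n,\delta}(k)=0$, the observation that the first-order term vanishes when $\beta=0$, and a second-order computation for $\mu_{2,n}$. Your $W^{(1)}$ agrees with the expansion of the coefficients $f_{i,j}$ in Appendix \ref{ap1}, and your argument for the vanishing of $\langle\psi_n\otimes 1, W^{(1)}(\psi_n\otimes 1)\rangle$ ($\pf$-antisymmetry plus $\int_\R\ddot\kappa\,ds=0$) is exactly the paper's.

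There is, however, one concrete gap in the second-order step. Your displayed formula for the second-order contribution uses only the reduced resolvent $\sum_{q\neq n}|\psi_q\rangle\langle\psi_q|\otimes(D_s^2+\cE_q-\cE_n)^{-1}$, as in Rayleigh--Schr\"odinger theory for an isolated eigenvalue. But $\cE_n$ is a threshold, not an isolated eigenvalue, and the regular part of the extended resolvent at $k=0$ also contains the finite part of the singular channel, i.e.\ the operator $\bigl(\mathsf{A}_n(k)-\tfrac1k\mathsf{A}_{n,-1}\bigr)\big|_{k=0}$ with kernel $-\tfrac12|s-s'|\,\eta(s)\eta(s')\psi_n(t)\psi_n(t')$. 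Its matrix element against $W^{(1)}(\psi_n\otimes 1)$ equals $\tfrac14\|\dot\kappa\|^2\langle\psi_n|t_2\psi_n\rangle^2$ to leading order, which is nonzero for generic $\omega$; in the paper this is precisely what cancels the $\|\tfrac{\dot\kappa}{2}\|^2\langle\psi_n|t_2\psi_n\rangle^2$ term produced when one completes the orthonormal basis in the $q\neq n$ sum, and without it the computation does not close to \eqref{19dic18a}. A second, minor point: the outgoing kernel of $(D_s^2-k^2)^{-1}$ is $\tfrac{i}{2k}e^{ik|s-s'|}$, so the residue of the $q=n$ channel is $\tfrac{i}{2}|\eta\otimes\psi_n\rangle\langle\eta\otimes\psi_n|$, not $\tfrac{1}{2i}|\eta\otimes\psi_n\rangle\langle\eta\otimes\psi_n|$; carried through, your sign convention would flip the sign of $\mu_{2,n}$. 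Apart from these two points, the remaining ingredients (meromorphic extension, uniqueness and simplicity of the pole via a Rouch\'e-type argument, the commutator identity $\langle\psi_q|\partial_{t_2}\psi_n\rangle=-\tfrac12(\cE_q-\cE_n)\langle\psi_q|t_2\psi_n\rangle$, and the integrations by parts in $s$) match the paper's proof.
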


\begin{remark}
In the particular case $n=1$ and if $\mu_{2,1}>0$, this result was previously obtained in \cite{Gru05}. Further, a similar result was obtained in \cite{BruMirPof18} when only twisting is considered, i.e., $\kappa=0$.

\end{remark}

 \begin{remark}
In formula \eqref{19dic18a} when $q < n$, the operator $(D_s^2+\cE_q-\cE_n)^{-1}$ has to
be understood as the limit of $(D_s^2+\cE_q-\cE_n-k^2)^{-1}$, acting in some weighted spaces, when $k\to0$.
\end{remark} 

\begin{remark} We decided to assume that $\cE_n$ is a non-degenerate eigenvalue of $\Delta_\omega$ in order to keep the proof simpler.  As we noticed in Remark \ref{upper_minima} the extension of the resolvent is also possible in the degenerate case, the number of resonances is bounded by the index of degeneracy and the behavior of each resonance is of the type \eqref{19dic18} (see \cite{BruMirPof18} for the case  $\kappa=0$). 
\end{remark}

\section{Meromorphic extension of the resolvents}\label{S1}

In this section, we show that the resolvent of the perturbed operator, acting in weighted spaces, can be extended meromorphically in a neighborhood of $\cE_1$. Our strategy is to exploit an explicit description of the pole of the resolvent of $H_0$ and then conclude by relating it to the resolvent of $H_\delta$ via a resolvent identity.

We start by setting some notations.  For $n\in\N$ and $p \in \R$, denote by $\gp_n(p)$ the orthogonal projection onto ker$({\mathfrak h}_0(p)-E_n(p))$. Let $\Psi_n(\cdot,p)$ be such that $${\mathfrak h}_0(p)\Psi_n(\cdot,p)=E_n(p)\Psi_n(\cdot,p),\quad ||\Psi_n(\cdot,p)||_{L^2(\omega)}=1.$$ Since the first eigenvalue is non-degenerate $\gp_1(p)=|\Psi_1(\cdot,p)\rangle\langle\Psi_1(\cdot,p)|$. Moreover, the functions $$\Psi_1(\cdot,p)$$ can be chosen analytically dependent on $p$. 

In addition, using the unitary operator of complex conjugation we can see that the band functions $E_n(p)$ are even for any $n\geq1$, and for the associated eigenfunctions $\Psi_n(\cdot,p)$, we have
\bel{18jul18}\Psi_n(\cdot,-p)=\overline{\Psi_n(\cdot,p)}, \quad p\in\R.\ee

Since we will mainly need these quantities for $p=0$,  we will use the simplified notation  $\psi_{n}=\Psi_{n}(\cdot,0)$ and $\pi_{n}=\gp_{n}(0)$ (Note that this is also coherent with the notation introduced just before
Theorem \ref{T2}  where $\beta =0$ ) . \Bk

 For $k\in \C^{++}$ set 
$$A(k):=\eta(H_0-\mathcal{E}_1-k^2)^{-1} \one_{(-\infty,\cE_{2})}(H_0)\eta$$
and 
\bel{20jul18_2}B(k):=\eta(H_0-\mathcal{E}_1-k^2)^{-1}\eta-A(k).\ee
Clearly, the operator-valued function $k\mapsto B(k):L^2(\R\times\omega)\to D(H)$ admits an analytic extension on $k$, provided that $k^2 \in \C\setminus [\cE_{2}-\cE_1, + \infty)$, in particular near $k=0$.
For $A$, we have the following meromorphic extension. 

\begin{proposition}\label{mero_part} There exists a neighborhood $\cD$ of zero {in $\C$} in which the operator valued function $k \mapsto A(k)$ admits a meromorphic extension on $\cD$. This extension has a unique pole at $k=0$ and  has  multiplicity one. Further,  in  the Laurent expansion $A(k)=\sum _{l\geq-1}A_lk^l$ we have 
\bel{20jul18}
A_{-1}=\frac{i}{2\sqrt{m_{\beta} }}|\eta\otimes\psi_1\rangle\langle \eta\otimes\psi_1|.\ee
\end{proposition}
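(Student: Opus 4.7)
The plan is to exploit the fiber decomposition \eqref{jan1} of $H_0$ to write $A(k)$ explicitly, then extract the pole structure at $k=0$ by a contour deformation argument. Since $\one_{(-\infty,\cE_2)}(H_0)$ corresponds in the Fourier picture to the first-band projection $\gp_1(p)$ restricted to $I = \{p \in \R : E_1(p) < \cE_2\}$ (an open interval around $0$), the bilinear pairing of $A(k)$ with test functions $f,g \in L^2(\R\times\omega)$ takes the form
\[
\langle A(k) f, g \rangle \,=\, \int_I \frac{M(p)}{E_1(p) - \cE_1 - k^2}\, dp,
\]
where $M(p)$ collects the natural pairings of $\widehat{\eta f}(p,\cdot)$ and $\widehat{\eta g}(p,\cdot)$ with $\Psi_1(\cdot,p)$. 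Because $\cE_1$ is a simple eigenvalue of ${\mathfrak h}_0(0)$ in the analytic family of type A, the eigenvalue $E_1$ and the eigenfunction $\Psi_1(\cdot,p)$ extend holomorphically in $p$ to a complex neighborhood of any compact subset of $I$ where the band remains simple, and the Paley-Wiener theorem (using $0 < N < \alpha/2$) ensures that $\widehat{\eta f}(p,\cdot)$ and $\widehat{\eta g}(p,\cdot)$ extend holomorphically to the strip $|\Im p| < N$. Applying \eqref{18jul18} to rewrite the antiholomorphic factor $\overline{\Psi_1(\cdot,p)}$ as $\Psi_1(\cdot,-p)$ on the real axis, one obtains that $M$ is holomorphic on a complex neighborhood of any compact subset of $I$; moreover a direct Fourier computation gives $M(0) = (2\pi)^{-1} \langle f, \eta\otimes\psi_1\rangle \langle \eta\otimes\psi_1, g\rangle$.

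Next I would factor the denominator. The effective-mass expansion \eqref{1bis} and the implicit function theorem applied to $m_\beta q^2(1 + O(kq)) = 1$ after the rescaling $p = kq$ (equivalently Weierstrass preparation) yield a holomorphic factorization on a bidisk around $(p,k) = (0,0)$:
\[
E_1(p) - \cE_1 - k^2 \,=\, m_\beta\, H(p,k)\, \bigl(p - p_+(k)\bigr)\bigl(p - p_-(k)\bigr),
\]
with $H(0,0) = 1$ and $p_\pm(k) = \pm k/\sqrt{m_\beta} + O(k^2)$ holomorphic in $k$. For $k \in \C^{++}$, $p_+(k)$ lies in the upper half plane and $p_-(k)$ in the lower one. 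After introducing a smooth cutoff localizing the integrand to a small neighborhood of $0$ (the complementary piece contributes a function analytic in $k$ near $0$ since the denominator is bounded away from $0$ there), I would deform the localized integration path into the lower half plane to a contour $\Gamma$ at depth $-c$, with $0 < c < N$ small enough that $\Gamma$ stays inside the strip of holomorphy. The residue theorem then gives, for $k$ in a sufficiently small disc $\cD$ around $0$ in which $p_-(k)$ remains above $\Gamma$,
\[
\langle A(k) f, g\rangle \,=\, \Phi(k) \,-\, 2\pi i\, \mathrm{Res}_{p=p_-(k)} \frac{M(p)}{E_1(p) - \cE_1 - k^2},
\]
where $\Phi(k)$ is holomorphic on $\cD$, so the whole singular structure is carried by the residue term.

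Finally, from the factorization,
\[
\mathrm{Res}_{p=p_-(k)} \frac{M(p)}{E_1(p) - \cE_1 - k^2} \,=\, \frac{M(p_-(k))}{m_\beta\, H(p_-(k),k)\, (p_-(k) - p_+(k))},
\]
and since $p_-(k) - p_+(k) = -2k/\sqrt{m_\beta} + O(k^2)$ with $H(0,0) = 1$, this is a simple pole at $k = 0$ with leading coefficient $-M(0)/(2\sqrt{m_\beta})$. Multiplying by $-2\pi i$ and substituting the expression for $M(0)$ gives
\[
A_{-1} \,=\, \frac{i}{2\sqrt{m_\beta}}\, |\eta\otimes\psi_1\rangle\langle\eta\otimes\psi_1|,
\]
which is rank one, so the pole has multiplicity one. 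Uniqueness of the pole in $\cD$ follows because in the bidisk $p_\pm(k)$ are the only zeros of the denominator and coincide only at $k = 0$. The main technical difficulty is the joint holomorphy in $(p,k)$ of the Weierstrass factorization, together with keeping the deformed contour $\Gamma$ separated from $p_-(k)$ uniformly in $k \in \cD$, which determines how small $\cD$ must be chosen.
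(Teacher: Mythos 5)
Your argument is essentially the paper's: both reduce $A(k)$ to an explicit integral over the first fiber band, exploit the non-degenerate minimum \eqref{1bis} through the holomorphic factorization $E_1(p)-\cE_1=(p\,d(p))^2$ (which is also what makes your roots $p_\pm(k)$ single-valued in $k$), and extract the simple pole by deforming the $p$-contour off the real axis and evaluating a residue, arriving at the same rank-one $A_{-1}$ in \eqref{20jul18}. The only point to repair is the localization step: you cannot deform the contour of an integrand multiplied by a \emph{smooth} cutoff, since the cutoff is nowhere holomorphic; truncate sharply to $[-\epsilon,\epsilon]$ and deform that segment keeping the endpoints fixed (the paper pushes it onto the semicircular arcs $\gamma_1,\gamma_2$), the remaining piece of $I$ being analytic in $k$ near $0$ exactly as you say.
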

\begin{proof}
We start by noticing that because of \eqref{1bis} and for  $|p|$ sufficiently small, say $|p|<\epsilon$, we can write 
 \bel{19jul18}\sqrt{E_1(p)-\mathcal{E}_1}=pd(p),\ee
 where $d(p)=m_\beta^{1/2}+O(p)$, is analytic for $|p|<\epsilon$.
 
Without loss of generality, consider $\epsilon< N$ and let $\cD\subset\C$ be an open subset of $B(0,\epsilon)$ containing zero and within  the bounded part defined by the curve $zd(z)$, where $|z|=\epsilon$. Set also $\epsilon_1$ such that $E_1(\epsilon_1)=\cE_2$. By the parity of the band functions we have that $E_1\left((-\epsilon_1,\epsilon_1)\right)=[\cE_1,\cE_2)$.

Now, note that $A(k)=\eta\cF^{-1}\int_{(-\epsilon_1,\epsilon_1)}^\oplus(E_1(p)-\mathcal{E}_1-k^2)^{-1}\gp_1(p)dp\cF\eta$, then its integral kernel is given by 
$$
\displaystyle{\frac{\eta(s)
\eta(s')}{2\pi}\int_{(-\epsilon_1,\epsilon_1)}e^{ip(s-s')}\Psi_1(t,p)\Psi_1(t',p)(E_1(p)-\mathcal{E}_1-k^2)^{-1}dp}.$$
Take $\epsilon<\epsilon_1$.
For $k\in\cD \cap \C^{++}$, using \eqref{19jul18} and the analytic properties of $\Psi_1$ with respect to $p$, we have 
$$
\ba{ll}
&\displaystyle{\int_{[-\epsilon,\epsilon]}e^{ip(s-s')}\Psi_1(t,p)\Psi_1(t',p)(p^2d(p)^2-k^2)^{-1}dp}\\[.8em]
 =&\displaystyle{\frac{1}{2 k}\int_{[-\epsilon,\epsilon]}e^{ip(s-s')}\Psi_1(t,p)\Psi_1(t',p)\left(\frac{1}{pd(p)-k}-\frac{1}{pd(p)+k}\right)dp}\\[.8em]
 =&\displaystyle{\frac{1}{2 k}}\left(
 -\displaystyle{\int_{\gamma_1}\frac{e^{iz(s-s')}\Psi_1(t,z)\Psi_1(t',z)}{zd(z)-k}dz}
+\displaystyle{\int_{\gamma_2}\frac{e^{iz(s-s')}\Psi_1(t,z)\Psi_1(t',z)}{zd(z)+k}dz } \right)
 \ea$$ where we have used the Cauchy formula for the 
 curves $\gamma_1(t)=\epsilon e^{-i t}$, $0<t<\pi$, 
and $\gamma_2(t)=\epsilon e^{i t}$, $0<t<\pi$. Consider the operator with integral kernel depending on $k$ given by 
$$\eta(s)
\eta(s') \left(
 -\displaystyle{\int_{\gamma_1}\frac{e^{iz(s-s')}\Psi_1(t,z)\Psi_1(t',z)}{zd(z)-k}dz}
+\displaystyle{\int_{\gamma_2}\frac{e^{iz(s-s')}\Psi_1(t,z)\Psi_1(t',z)}{zd(z)+k}dz } \right) .$$ For every $k\in\cD$, this operator is bounded from $L^2(\R\times\omega)$ to $D(H)$ and due to our choice of $\epsilon$ and $\cD$ it depends analytically on $k$. This implies that $A(k)$ has a meromorphic extension from $\cD \cap \C^{++}$ to $\cD$ with a unique pole at zero (the integral over $(-\epsilon_1,\epsilon_1)\setminus[-\epsilon, \epsilon]$ is clearly analytic).

 Finally, 
to compute $A_{-1}$ we just need to evaluate the above line integral at $k=0$, obtaining the integral kernel 
$$\frac{\eta(s)
\eta(s')}{4\pi} \displaystyle{\int_{\gamma}\frac{e^{iz(s-s')}\Psi_1(t,z)\Psi_1(t',z)}{zd(z)}dz}=\frac{ i}{2 \sqrt{m_{\beta} }} \eta(s)\eta(s')\Psi_1(t,0)\Psi_1(t',0)$$
(here $\gamma=e^{i t}$, $0<t\leq2\pi$). This kernel obviously defines a rank one operator, in consequence the pole at zero has multiplicity one. 
\end{proof}

From the previous Proposition and \eqref{20jul18_2}, we can see that 
\bel{6sep17a}\eta(H_0-\mathcal{E}_1-k^2)^{-1}\eta=
\frac{{A_{-1}}}{k}+F(k),\ee
where $F(k):=\sum_{l=0}^\infty A_lk^l+B(k)$ can be extended analytically from $k\in \cD \cap \C^{++}$ to $k\in\cD$, in $\cL(L^2(\R\times\omega), D(H))$.

We will denote the meromorphic extension of $\eta(H_0-\mathcal{E}_1-k^2)^{-1}\eta$ to $\cD$ by $\eta R_{0}(k)\eta$.

Let us now  consider the problem of extending the resolvent of $H_\delta $. Set 
\bel{7sep18b} W_\delta=H_{\delta}-H_{0}.\ee

Then, as in \eqref{29nov18}, we have the  decomposition 
$W_\delta= \sum_{0\leq i,j\leq 2} f_{i,j}(\delta)\partial_{i,j},$ where $ f_{i,j}(\delta)(s,t)=O(\delta e^{-\alpha\langle s\rangle}). $
This, combined with  \eqref{E:defeta}, ensures that \bel{11}\eta^{-1}W_\delta\eta^{-1}\in\cL(D(H),L^2({\R\times\omega})).\ee
Moreover, $W_\delta\psi_n$ is well defined and  satisfies 
\bel{31jan19}
(W_\delta\psi_n)(s,t)= \delta e^{-\alpha\langle s\rangle}(\tilde{g}+O(1)),
\ee
uniformly with respect to $(s,t) \in \R\times\omega$ and with $\tilde{g}$ bounded and independent of $\delta$ (see the Appendix \ref{ap1}).

Next, consider the following identity valid for $k\in\C^{++}$
\bel{4sept17e}
\eta(H_{\delta}-\mathcal{E}_1-k^2)^{-1} \eta=\eta R_{0}(k)\eta\left({\rm Id}+\eta^{-1} W_\delta\eta^{-1}\eta R_{0}(k)\eta\right)^{-1}.\ee
 This identity motivates the following preparatory Lemma for which we need to define 
\bel{E:defPhi}\Phi_\delta:= \frac{i}{2\sqrt{m_{\beta} }}
\eta^{-1}W_\delta \psi_1.
\ee 
 \begin{lemma}\label{Lholo0}
There exists a neighborhood $\cD$ of zero and $\delta_0>0$ such that 
for any $0<\delta\leq\delta_0$ and $k\in \cD\setminus\{0\}$ 
$$\eta^{-1} W_\delta\eta^{-1}\eta R_{0}(k)\eta=\frac{1}{k} K_0(\delta) + T(\delta,k),$$
where $K_0(\delta)$ is the rank one operator \bel{4sep17} K_0(\delta)= |\Phi_\delta\rangle\langle\eta\otimes\psi_1|,
\ee 
and 
$\cD\ni k\mapsto(T(\delta,k)$: $L^2({\R\times\omega})\to L^2({\R\times\omega}))$ is an analytic operator-valued function. Moreover, 
\bel{2sep17}\sup_{k \in \cD}||T(\delta,k)||=O(\delta).
\ee
\end{lemma}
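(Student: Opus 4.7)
The plan is to substitute the decomposition \eqref{6sep17a} of $\eta R_0(k)\eta$ into $\eta^{-1}W_\delta\eta^{-1}\cdot \eta R_0(k)\eta$, identify the singular part as the advertised rank-one operator $K_0(\delta)$, and then show that the regular part inherits an $O(\delta)$ bound from $W_\delta$ itself.

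First, I would write
\begin{equation*}
\eta^{-1}W_\delta \eta^{-1}\,\eta R_0(k)\eta \;=\; \frac{1}{k}\,\eta^{-1}W_\delta \eta^{-1}A_{-1} \;+\; \eta^{-1}W_\delta \eta^{-1}F(k),
\end{equation*}
and compute the first term explicitly: using the expression of $A_{-1}$ in \eqref{20jul18}, the identity $\eta^{-1}(\eta\otimes\psi_1)=\psi_1$, and the fact that $W_\delta\psi_1$ is a well-defined $L^2$-function with the decay \eqref{31jan19}, one obtains
\begin{equation*}
\eta^{-1}W_\delta \eta^{-1}A_{-1}=\frac{i}{2\sqrt{m_\beta}}\,|\eta^{-1}W_\delta\psi_1\rangle\langle\eta\otimes\psi_1|=|\Phi_\delta\rangle\langle\eta\otimes\psi_1|=K_0(\delta),
\end{equation*}
with $\Phi_\delta$ as defined in \eqref{E:defPhi}. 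This isolates the pole in $k$ and gives the regular remainder
\begin{equation*}
T(\delta,k):=\eta^{-1}W_\delta \eta^{-1}F(k).
\end{equation*}

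Next I would check analyticity and the size estimate. Analyticity in $k$ is immediate from that of $F$: by construction $F:\cD\to\cL(L^2(\R\times\omega),D(H))$ is analytic, and \eqref{11} gives $\eta^{-1}W_\delta\eta^{-1}\in\cL(D(H),L^2(\R\times\omega))$, so their composition is an analytic operator-valued function $\cD\to\cL(L^2(\R\times\omega),L^2(\R\times\omega))$. For the norm bound, I would combine two ingredients: (i) $F(k)$ is analytic on a compact neighborhood of $0$, hence $\sup_{k\in\cD}\|F(k)\|_{L^2\to D(H)}<\infty$ uniformly; (ii) $\eta^{-1}W_\delta\eta^{-1}$ is $O(\delta)$ as an operator from $D(H)$ to $L^2(\R\times\omega)$. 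The second point follows from the decomposition \eqref{29nov18} applied to $W_\delta$: the coefficients $f_{i,j}(\delta)$ are of order $\delta e^{-\alpha\langle s\rangle}$ uniformly in $t$, while each conjugation by $\eta^{\pm 1}$ produces at worst a factor $e^{N\langle s\rangle}$ together with bounded derivatives of $\eta^{-1}$ (since $\partial_s\eta^{-1}=N\tfrac{s}{\langle s\rangle}\eta^{-1}$). Because $N<\alpha/2$, the total prefactor is $O(\delta e^{(2N-\alpha)\langle s\rangle})=O(\delta)$, so $\eta^{-1}W_\delta\eta^{-1}$ is a differential operator of order at most two with coefficients of size $O(\delta)$, proving the desired bound on its $D(H)\to L^2$ norm and thus \eqref{2sep17}.

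The main technical subtlety is the bookkeeping in Step (ii): the operator $W_\delta$ contains second-order derivatives in $s$ and $\varphi$, so conjugation by $\eta^{-1}$ on the right produces lower-order terms coming from the derivatives of $\eta^{-1}$; one must check that all of these, once multiplied by the exponentially decaying $f_{i,j}(\delta)$ and by $\eta^{-1}$ on the left, remain uniformly bounded and of size $O(\delta)$. This is exactly where the condition $N<\alpha/2$ in \eqref{E:defeta} plays the crucial role, compensating the two weights $\eta^{-1}$ against the single exponential decay of each coefficient. Once this is settled, the decomposition $\frac{1}{k}K_0(\delta)+T(\delta,k)$, the rank-one form of $K_0(\delta)$, the analyticity of $T(\delta,\cdot)$, and the uniform bound \eqref{2sep17} all follow directly.
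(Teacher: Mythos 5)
Your proposal is correct and follows essentially the same route as the paper: you define $K_0(\delta)=\eta^{-1}W_\delta\eta^{-1}A_{-1}$ and $T(\delta,k)=\eta^{-1}W_\delta\eta^{-1}F(k)$ via the decomposition \eqref{6sep17a}, identify $K_0(\delta)$ with $|\Phi_\delta\rangle\langle\eta\otimes\psi_1|$ using \eqref{20jul18} and \eqref{31jan19}, and derive analyticity and the $O(\delta)$ bound from \eqref{11} together with the boundedness of $F(k)$ in $\cL(L^2(\R\times\omega),D(H))$. The only difference is that you spell out the weight bookkeeping behind \eqref{11} (where $N<\alpha/2$ enters), which the paper leaves implicit by citing \eqref{11} and \eqref{decayf} directly.
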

\begin{proof}
Taking into account \eqref{6sep17a} define \bel{5oct18}K_0(\delta):=\eta^{-1}W_\delta\eta^{-1}A_{-1}\ee and \bel{5oct18b}T(\delta,k):=\eta^{-1}W_\delta\eta^{-1}F(k).\ee Thanks to \eqref{31jan19} and \eqref{20jul18}, $K_0(\delta)$ is well defined and satisfies \eqref{4sep17}. Then the Lemma follows from   \eqref{11} and  
$F(k)\in\cL(L^2(\R\times\omega),D(H))$ (see after \eqref{6sep17a}) . \end{proof}

 Now we are in condition to prove the main result of this section, namely the existence of a meromorphic extension of the resolvent of the full Hamiltonian in a neighborhood of the bottom of its spectrum. 

\begin{proposition}\label{prhdelta}
There exists a neighborhood $\cD$ of zero {in $\C$},  and $\delta_0>0$ such that for $\delta<\delta_0$ the operator valued function $k \mapsto \eta (H_\delta-\cE_1-k^2)^{-1}\eta$ admits a meromorphic extension on $\cD$. We write this extension by $\eta R(k)\eta $.
\end{proposition}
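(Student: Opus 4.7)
The plan is to start from the resolvent identity \eqref{4sept17e} and combine it with the splitting provided by Lemma \ref{Lholo0}. We have, for $k\in\cD\cap\C^{++}$,
\[
\eta(H_\delta-\cE_1-k^2)^{-1}\eta = \eta R_0(k)\eta\,\bigl(\mathrm{Id}+\tfrac{1}{k}K_0(\delta)+T(\delta,k)\bigr)^{-1},
\]
and by Proposition \ref{mero_part} the factor $\eta R_0(k)\eta$ already extends meromorphically on $\cD$ with a unique simple pole at $k=0$. Hence the whole problem reduces to showing that, for $\delta$ small, the middle factor admits a meromorphic extension in $\cL(L^2(\R\times\omega))$ on $\cD$.

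To invert $\mathrm{Id}+\tfrac{1}{k}K_0(\delta)+T(\delta,k)$, first I would use the uniform bound \eqref{2sep17}: choose $\delta_0>0$ small enough so that for every $\delta\leq\delta_0$ and $k\in\cD$ one has $\|T(\delta,k)\|\leq 1/2$. Then $(\mathrm{Id}+T(\delta,k))^{-1}$ exists by a Neumann series and is analytic in $k$ on $\cD$. Using the factorization
\[
\mathrm{Id}+\tfrac{1}{k}K_0(\delta)+T(\delta,k) = (\mathrm{Id}+T(\delta,k))\bigl(\mathrm{Id}+\tfrac{1}{k}(\mathrm{Id}+T(\delta,k))^{-1}K_0(\delta)\bigr),
\]
the task becomes the inversion of a rank-one perturbation of the identity, since by \eqref{4sep17} the operator $(\mathrm{Id}+T(\delta,k))^{-1}K_0(\delta)$ is of the form $|\widetilde{\Phi}(\delta,k)\rangle\langle \eta\otimes\psi_1|$ with $\widetilde{\Phi}(\delta,k):=(\mathrm{Id}+T(\delta,k))^{-1}\Phi_\delta$ depending analytically on $k$ in $\cD$.

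Next I would apply the Sherman--Morrison formula: the rank-one factor is invertible precisely when the scalar
\[
D(\delta,k) := k + \langle \eta\otimes\psi_1\,|\,\widetilde{\Phi}(\delta,k)\rangle
\]
is non-zero, and its inverse is
\[
\mathrm{Id} - \frac{|\widetilde{\Phi}(\delta,k)\rangle\langle \eta\otimes\psi_1|}{D(\delta,k)}.
\]
Since $D(\delta,\cdot)$ is analytic on $\cD$ (it is a sum of an analytic scalar-valued function and $k$) and not identically zero for $\delta\leq\delta_0$ (indeed $D(\delta,k)=k+O(\delta)$ uniformly on $\cD$), its zeros form a discrete subset of $\cD$. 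Combining these pieces,
\[
\eta R(k)\eta := \eta R_0(k)\eta \,\Bigl(\mathrm{Id} - \tfrac{|\widetilde{\Phi}(\delta,k)\rangle\langle \eta\otimes\psi_1|}{D(\delta,k)}\Bigr)(\mathrm{Id}+T(\delta,k))^{-1}
\]
provides the desired meromorphic extension of $\eta(H_\delta-\cE_1-k^2)^{-1}\eta$ on $\cD$, whose possible poles are located at $k=0$ (from $\eta R_0(k)\eta$) and at the zeros of $D(\delta,k)$; by uniqueness of analytic continuation this extension coincides on $\cD\cap\C^{++}$ with the original resolvent.

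The main subtlety is the combination of the singular $\tfrac{1}{k}K_0(\delta)$ term with the invertibility of the identity plus $T(\delta,k)$; the key point that makes everything work cleanly is the rank-one structure of $K_0(\delta)$ in \eqref{4sep17}, which turns an otherwise delicate operator-valued problem into a scalar one (the function $D(\delta,k)$). The finer analysis of $D(\delta,k)$, producing the unique pole $k(\delta)$ and its asymptotics, is precisely the content of Theorem \ref{T1} and will be carried out in the next section.
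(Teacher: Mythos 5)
Your proof is correct and follows essentially the same route as the paper: the identity \eqref{4sept17e}, the splitting from Lemma \ref{Lholo0}, the factorization isolating the rank-one singular part, and the reduction to the scalar function $D(\delta,k)=k+\nu_\delta(k)$ (your Sherman--Morrison inverse is exactly the paper's formula \eqref{29sept17a} written with projections). The only cosmetic difference is that the paper additionally observes $A_{-1}\Pi_\delta^{\perp}=0$ to obtain the cleaner expression \eqref{12feb19}, which is not needed for mere meromorphy but is used later in the proof of Theorem \ref{T1}.
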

\begin{proof} Consider the identity \eqref{4sept17e}, and note that from Lemma \ref{Lholo0} for $k\in \mathcal{D}\setminus\{0\}$ and 
 \ $\delta$ sufficiently small we can write 
\bel{27sep17}\Big( {\rm Id} + \eta^{-1} W_\delta R_{0}(k)\eta \Big)=
\Big( {\rm Id} +T(\delta,k) \Big)
\Big( {\rm Id} +\frac{1}{k}({\rm Id}+ T(\delta,k) )^{-1} K_0(\delta) \Big).\ee
For $k\in \cD\setminus\{0\}$ let us set the rank one operator
$$K:=\frac{1}{k}({\rm Id}+ T(\delta,k) )^{-1} K_0(\delta)=\frac{1}{k} |({\rm Id}+ T(\delta,k) )^{-1}{\Phi}_{\delta}\rangle\langle \eta\otimes\psi_{1}|,$$ 
and define  \bel{defnu}
 \nu_\delta(k):=\langle\eta\otimes\psi_1|({\rm Id}+ T(\delta,k))^{-1}\Phi_\delta\rangle. \ee Note that if $\nu_\delta(k)\neq0$, then $\frac{\nu_{\delta}(k)}{k}$ in the only non zero eigenvalue of $K$. The inverse of $({\rm Id}+K)$ is of the form 
\bel{29sept17a}
({\rm Id}+K)^{-1}=\Pi_{\delta}^{\perp}+\frac{k}{k+\nu_{\delta}(k)} \Pi_{\delta},\ee
where $\Pi_{\delta}^{\perp}$ is the projection onto $(\mbox{span }\{\eta\otimes\psi_{1}\})^{\perp}$ into the direction $\tilde{\Phi}_\delta$ and $\Pi_{\delta}={\rm Id}-\Pi_{\delta}^{\perp}$. 

Define $k(\delta)$ as the solutions of $k+\nu_\delta(k)=0$. In consequence, putting together \eqref{4sept17e}, \eqref{6sep17a}, \eqref{27sep17} and \eqref{29sept17a} we obtain that for all $k\in \cD\setminus \{ 0, k(\delta)\}$
$$\eta (H_\delta-\cE_1-k^2)^{-1}\eta= \Big( \frac{1}{k} A_{-1}+F(k) \Big)
\Big( \Pi_{\delta}^{\perp} + \frac{k}{k+\nu_{\delta}(k)} \Pi_{\delta} \Big) \; ({\rm Id}+ T(\delta,k) )^{-1}.$$
By the definition of $\Pi_{\delta}^{\perp}$ and $A_{-1}$, 
we have that $ A_{-1} \Pi_{\delta}^{\perp}=0$ and then:
\bel{12feb19}\eta (H_\delta-\cE_1-k^2)^{-1}\eta = \frac{1}{k+\nu_\delta(k)} \Big(A_{-1}+kF(k)\Big)\Pi_{\delta} ({\rm Id}+ T(\delta,k) )^{-1}
 + F(k) \Pi_{\delta}^{\perp} ({\rm Id}+ T(\delta,k) )^{-1}.\ee
 Therefore, for $\delta$ sufficiently small, by the analyticity of $F(k)$ and Lemma \ref{Lholo0},   $k \mapsto \eta (H_\delta-\cE_1-k^2)^{-1}\eta$ admits a meromorphic extension to $\cD$.
\end{proof}

\section{Proof of theorem \ref{T1} ( $\beta\neq0$)}\label{S3}

In this section, we provide the proof of Theorem \ref{T1}. We start by collecting some information about the behavior of the quantity  $\nu_\delta$ defined in \eqref{defnu}. First, in Lemma \ref{le22may18}, we derive its asymptotic behavior while in Lemma \ref{lereal_part} we obtain a property that will allow us to show that the pole is purely imaginary. 

\begin{lemma}\label{le22may18}
 Let  $\cD$ and $\delta_0$ be as in Proposition \ref{prhdelta}. Then for  $k\in \cD$  and $\delta \in[0,\delta_0]$ we have:
$$\nu_{\delta}(k)=-i\mu_1\delta -i\mu_{2}\delta^2+k\delta^2g_\delta(k)+O(\delta^3),$$
where $\mu_1$ is given in \eqref{12sep18},
$\mu_{2}$ is a real number and $g_\delta$ is an analytic function.
\end{lemma}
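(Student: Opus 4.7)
The plan is to expand both the operator $(\mathrm{Id}+T(\delta,k))^{-1}$ and the vector $\Phi_\delta$ in powers of $\delta$ directly from \eqref{defnu}, then organise by powers of $\delta$ and of $k$. Lemma~\ref{Lholo0} gives $\|T(\delta,k)\|=O(\delta)$ uniformly in $k\in\cD$, so the Neumann series yields $(\mathrm{Id}+T(\delta,k))^{-1}=\mathrm{Id}-T(\delta,k)+O(\delta^2)$. On the other hand, Taylor expansion of \eqref{29nov18bis} in $\delta$ (using the explicit form of $h_\delta$ and $\theta_\delta$) gives $W_\delta=\delta W^{(1)}+\delta^2 W^{(2)}+O(\delta^3)$, where $W^{(j)}$ are second-order differential operators with $C^0$-coefficients decaying exponentially in $s$; consequently $\Phi_\delta=\delta\Phi^{(1)}+\delta^2\Phi^{(2)}+O(\delta^3)$ with $\Phi^{(j)}=\tfrac{i}{2\sqrt{m_\beta}}\eta^{-1}W^{(j)}\psi_1$. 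Substitution produces
\begin{equation*}
\nu_\delta(k)=\langle\eta\otimes\psi_1\,|\,\Phi_\delta\rangle-\langle\eta\otimes\psi_1\,|\,T(\delta,k)\Phi_\delta\rangle+O(\delta^3),
\end{equation*}
in which the first scalar product is $k$-independent (as $\Phi_\delta$ does not involve the resolvent), while the cross term is already of order $\delta^2$.

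At order $\delta$, since $\eta$ cancels with $\eta^{-1}$ in the pairing, the first product reduces to $\tfrac{i\delta}{2\sqrt{m_\beta}}\iint_{\R\times\omega}\overline{\psi_1(t)}(W^{(1)}\psi_1)(s,t)\,ds\,dt$, which I would identify with $-i\mu_1\delta$ by direct computation. Writing $Q_0=\partial_s-\beta\partial_\varphi$, differentiation of \eqref{29nov18bis} at $\delta=0$ yields $W^{(1)}=-Q_0A_1-A_1Q_0$ with $A_1=\tfrac{\rho}{2}Q_0+Q_0\tfrac{\rho}{2}+(\tau-\varepsilon)\partial_\varphi$ and $\rho(s,t)=\kappa(s)(t_2\cos(\beta s)+t_3\sin(\beta s))$. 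Since $\psi_1$ depends only on $t$, every $\partial_s$ acting on $\psi_1$ vanishes, and the remaining $\partial_s$'s on the coefficients can be integrated by parts in $s$ (legitimate by the exponential decay of $\kappa$ and $\tau-\varepsilon$). The inner pairings over $\omega$ then produce exactly the function $F$ from \eqref{D:F} and the factor $\|\partial_\varphi\psi_1\|^2$, and assembling the terms gives formula \eqref{12sep18}. This algebraic matching is where the main bookkeeping effort lies.

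At order $\delta^2$ the contribution splits in two. First, $\langle\eta\otimes\psi_1\,|\,\delta^2\Phi^{(2)}\rangle=\tfrac{i\delta^2}{2\sqrt{m_\beta}}\langle\psi_1\,|\,W^{(2)}\psi_1\rangle$, which is $k$-independent. Second, using $T(\delta,k)=\delta\eta^{-1}W^{(1)}\eta^{-1}F(k)+O(\delta^2)$ and splitting $F(k)=F(0)+kG(k)$ with $G$ analytic on $\cD$, one finds
\begin{equation*}
-\langle\eta\otimes\psi_1\,|\,T(\delta,k)\Phi_\delta\rangle=-\tfrac{i\delta^2}{2\sqrt{m_\beta}}\langle\psi_1\,|\,W^{(1)}\eta^{-1}F(0)\eta^{-1}W^{(1)}\psi_1\rangle+k\delta^2 g_\delta(k)+O(\delta^3),
\end{equation*}
with $g_\delta$ analytic. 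The two $k$-independent $\delta^2$-contributions combine to define $-i\mu_2\delta^2$. To prove $\mu_2\in\R$, I would use that $W^{(1)},W^{(2)}$ are formally self-adjoint as Taylor coefficients of the symmetric family $H_\delta-H_0$; that $\psi_1$ can be chosen real (since $\mathfrak{h}_0(0)$ has real coefficients); and that $F(0)$ is self-adjoint. For this last point, for $k=i\sigma$ with $\sigma>0$ small the operator $\eta(H_0-\cE_1+\sigma^2)^{-1}\eta=\tfrac{A_{-1}}{i\sigma}+F(i\sigma)$ is self-adjoint (resolvent of a self-adjoint operator at a real regular point) while by \eqref{20jul18} $A_{-1}/(i\sigma)=\tfrac{1}{2\sigma\sqrt{m_\beta}}|\eta\otimes\psi_1\rangle\langle\eta\otimes\psi_1|$ is self-adjoint, so $F(i\sigma)$ must be as well; letting $\sigma\to 0^+$ gives $F(0)^*=F(0)$. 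Consequently both $\langle\psi_1,W^{(2)}\psi_1\rangle$ and $\langle\psi_1,W^{(1)}\eta^{-1}F(0)\eta^{-1}W^{(1)}\psi_1\rangle$ are real, and the overall factor $i$ makes the $\delta^2$-term purely imaginary, so $\mu_2\in\R$. The $O(\delta^3)$ remainder absorbs the higher Neumann term $T(\delta,k)^2\Phi_\delta$ together with the $O(\delta^3)$ parts of the Taylor expansions of $W_\delta$ and $\Phi_\delta$.
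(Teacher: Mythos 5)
Your argument follows the paper's proof essentially verbatim: the same Neumann-series truncation of $({\rm Id}+T(\delta,k))^{-1}$, the same splitting $F(k)=F(0)+kG(k)$ to isolate the analytic $k\delta^2 g_\delta(k)$ term, and the same identification of $\mu_1$ from the linear-in-$\delta$ term of $\langle\psi_1|W_\delta\psi_1\rangle$ (your structure $W^{(1)}=-Q_0A_1-A_1Q_0$ reproduces exactly the computation of $\breve{\mu}_{1,1}$ in Lemma \ref{le_app} of the Appendix). Your justification that $\mu_2$ is real --- self-adjointness of $F(0)$ obtained by letting $\sigma\to 0^+$ in $F(i\sigma)$, together with reality of $\psi_1$ and formal symmetry of the $W^{(j)}$ --- is correct and is just the $k=0$ instance of the paper's Lemma \ref{lereal_part}.
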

\begin{proof}
First note that by the Neumann series and \eqref{2sep17} 
$$\ba{ll}({\rm Id}+T(\delta,k))^{-1}&={\rm Id}-T(\delta,k)+O(\delta^2)\\
&={\rm Id}-T(\delta,0)+kG(\delta,k) +O(\delta^2),\ea
$$
where $G$ is analytic with uniformly bounded norm in $\delta$ small and $k\in \cD$. 
Then 
\bel{19jan19}\ba{ll}\nu_{\delta}(k)&=\displaystyle{\frac{i}{2\sqrt{m_{\beta}}}\langle\psi_1\eta|\eta^{-1} W_\delta \psi_1\rangle-
\frac{i}{2\sqrt{m_{\beta}}}\langle\psi_1\eta|T(\delta,0)\eta^{-1} W_\delta \psi_1\rangle+k\delta^2g_\delta(k)+O(\delta^3)},
\ea\ee
 where we used \eqref{31jan19} and \eqref{2sep17}. By \eqref{31jan19}, since $T(\delta,0)=\eta^{-1} W_\delta\eta^{-1}F(0)$, to find $\mu_1$ we need only the first asymptotic term of $\langle\psi_1\eta|\eta^{-1} W_\delta\psi_1\rangle$. We therefore have $\mu_1=-\frac{1}{2\sqrt{m_{\beta}}}\breve{\mu}_{1,1}$, where $\breve{\mu}_{1,1}$ is computed in Lemma \ref{le_app} in the Appendix \eqref{ap2}. Further, $\mu_2$ comes  from the second term of $\langle\psi_1\eta|\eta^{-1} W_\delta\psi_1\rangle$ and from the main term of $\langle\psi_1\eta|T(\delta,0)\eta^{-1} W_\delta \psi_1\rangle$ and is hence real.
 \end{proof}

\begin{lemma}\label{lereal_part}
For all $\alpha$ real and small 
$$i \nu _{\delta}(i\alpha)\in\R.$$
\end{lemma}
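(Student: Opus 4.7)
The plan is to prove that $i\nu_\delta(i\alpha) \in \mathbb{R}$ for $\alpha>0$ small by a complex-conjugation symmetry argument, and then extend it to all small real $\alpha$ by analyticity.

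First I will collect the real-symmetry properties of the building blocks. Since $\kappa$, $\tau$, $\varepsilon$, $\theta_\delta$, $h_\delta$ and the coordinates $t_2, t_3$ are all real-valued, the differential operators $H_0$ in \eqref{7sep18a} and $H_\delta$ in \eqref{29nov18bis} have real coefficients; consequently $W_\delta=H_\delta-H_0$ has real coefficients as well, and each of these operators commutes with complex conjugation (i.e., maps real-valued functions to real-valued functions). The weight $\eta$ from \eqref{E:defeta} is real and strictly positive, and by \eqref{18jul18} the eigenfunction $\psi_1=\Psi_1(\cdot,0)$ satisfies $\psi_1=\overline{\psi_1}$, so it can be chosen real.

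Next, fix $\alpha>0$ small. Since $\mathcal{E}_1-\alpha^2<\mathcal{E}_1=\inf\sigma(H_0)$, the operator $\eta(H_0-\mathcal{E}_1+\alpha^2)^{-1}\eta$ is a bona fide bounded resolvent, and by uniqueness of analytic continuation from $\mathbb{C}^{++}$ it coincides with the meromorphic extension $\eta R_0(i\alpha)\eta$. Because $H_0$ is real, this operator preserves real-valued functions. Using \eqref{20jul18}, one finds
$$\frac{A_{-1}}{i\alpha}=\frac{1}{2\alpha\sqrt{m_\beta}}\,|\eta\otimes\psi_1\rangle\langle\eta\otimes\psi_1|,$$
so the $i$ in $A_{-1}$ cancels against the $i$ in $1/(i\alpha)$ and this operator is real. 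Hence $F(i\alpha)=\eta R_0(i\alpha)\eta-A_{-1}/(i\alpha)$ is real. Since $\eta^{-1}W_\delta\eta^{-1}$ is real, $T(\delta,i\alpha)=\eta^{-1}W_\delta\eta^{-1}F(i\alpha)$ is a product of real operators, hence real; for $\delta$ small the Neumann series $({\rm Id}+T(\delta,i\alpha))^{-1}=\sum_{n\geq 0}(-T(\delta,i\alpha))^n$ converges and is therefore real as well.

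The key observation is that $\Phi_\delta=\frac{i}{2\sqrt{m_\beta}}\,\eta^{-1}W_\delta\psi_1$ is \emph{purely imaginary}, being $i$ times a real-valued function. Applying the real operator $({\rm Id}+T(\delta,i\alpha))^{-1}$ to $\Phi_\delta$ yields a purely imaginary vector, and pairing it against the real vector $\eta\otimes\psi_1$ gives
$$\nu_\delta(i\alpha)=\langle\eta\otimes\psi_1,({\rm Id}+T(\delta,i\alpha))^{-1}\Phi_\delta\rangle\in i\mathbb{R},$$
so $i\nu_\delta(i\alpha)\in\mathbb{R}$ for all sufficiently small $\alpha>0$. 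Finally, to cover $\alpha\leq 0$, I use that $k\mapsto\nu_\delta(k)$ is analytic on $\cD$ for $\delta$ small (since $T(\delta,k)$ and $\Phi_\delta$ depend analytically on their arguments and the Neumann series converges uniformly), so $\alpha\mapsto i\nu_\delta(i\alpha)$ is analytic on a complex neighborhood of $0$; being real on an interval $(0,\epsilon_0)\subset\mathbb{R}$, the Schwarz reflection principle (equivalently, the identity theorem applied to $\alpha\mapsto i\nu_\delta(i\alpha)-\overline{i\nu_\delta(i\bar\alpha)}$) forces it to be real on the entire real interval in its domain.

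The only delicate point is bookkeeping: the factor of $i$ hidden in $A_{-1}$ must be tracked so that $A_{-1}/(i\alpha)$ comes out real, and one must notice that $\Phi_\delta$ is purely imaginary rather than real. Once these are checked, the proof follows mechanically from the reality of $H_0$, $W_\delta$, $\eta$ and $\psi_1$.
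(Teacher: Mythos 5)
Your proof is correct, but it runs on a different symmetry than the paper's. The paper shows that $F(i\alpha)$ is \emph{self-adjoint}: it computes the integral kernel of $A(i\alpha)-\tfrac{A_{-1}}{i\alpha}$ explicitly and uses the parity of $E_1$ together with \eqref{18jul18} to see that this kernel is real and symmetric; it then expands $({\rm Id}+T)^{-1}$ in a Neumann series and exploits the palindromic structure $\cB\cA\cB\cdots\cA\cB$ of each term (with $\cA=F(i\alpha)$, $\cB=\eta^{-1}W_\delta\eta^{-1}$ both self-adjoint) to conclude that every term $\langle\eta\psi_1|T^n\eta^{-1}W_\delta\psi_1\rangle$ is real. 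You instead use \emph{reality} (commutation with complex conjugation): for $\alpha>0$ you identify $\eta R_0(i\alpha)\eta$ with the genuine weighted resolvent at the real point $\cE_1-\alpha^2$ below the spectrum, which preserves real functions since $H_0$ has real coefficients, check that the $i$'s in $A_{-1}/(i\alpha)$ cancel, and then conclude in one stroke because reality is stable under arbitrary operator products — no kernel computation and no term-by-term palindrome bookkeeping are needed, only the observation that $\Phi_\delta$ is purely imaginary. The price is the extra continuation step (Schwarz reflection) to pass from $\alpha>0$ to all small real $\alpha$, including the second-sheet values $\alpha<0$, whereas the paper's explicit kernel formula is valid for all real $\alpha$ at once. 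Both arguments are sound; note that the paper's kernel is in fact real \emph{and} symmetric, so it actually contains both symmetries, and the two proofs are extracting different halves of the same underlying structure.
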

\begin{proof}
First, we will prove that $F(i\alpha)$ is self-adjoint. To do that recall that $F(k)=A(k)-\frac{A_{-1}}{k}+B(k).$
Since $B(i\alpha)=\eta f(H_0)\eta$ for $f(x)=(x-\cE_1+\alpha^2)^{-1}\one_{[\cE_2,\infty)}(x)$, it is obviously self-adjoint. 
 
From Proposition \ref{mero_part} we have that the kernel of $A(i \alpha)-\frac{A_{-1}}{i \alpha}$ is 
$$\displaystyle{\frac{\eta(s)\eta(s')}{2\pi}\left(\int_{[-\epsilon_1,\epsilon_1]}\frac{e^{ip(s-s')}\Psi_1(t,p)\Psi_1(t',p)}{E_1(p)-\mathcal{E}_1+\alpha^2}dp-
\frac{\pi i  \Psi_1(t,0)\Psi_1(t',0)}{\sqrt{m_{\beta}}i\alpha}\right)},$$
Moreover, making a change of variables for $\alpha\neq0$, using the parity of the band functions and the property \eqref{18jul18} we obtain 
$$\int_{[-\epsilon_1,\epsilon_1]}\frac{e^{ip(s-s')}\Psi_1(t,p)\Psi_1(t',p)}{E_1(p)-\mathcal{E}_1+\alpha^2}dp=\int_0^{\epsilon_1}\frac{2\,{\rm Re}{\big(e^{ip(s-s')}\Psi_1(t,p)\Psi_1(t',p)\big)}}{E_1(p)-\mathcal{E}_1+\alpha^2}dp,$$
which is real and symmetric. Also notice that $\Psi_1(t,0)$ is real valued. This gives us the self-adjointness of 
 $A(i\alpha)-\frac{A_{-1}}{i\alpha}$.

Now, recall that $\nu _{\delta}(k):=\frac{i}{2\sqrt{m_{\beta} }}\langle\psi_1\eta|({\rm Id}+ T(\delta,k))^{-1}\eta^{-1}W_\delta\psi_1\rangle,$ then using the Neumann series it is enough to show that $\langle\psi_1\eta|T(\delta,k)^n\eta^{-1}W_\delta\psi_1\rangle$ is real valued for all $n\in\N$ when $k=i\alpha$. But since $T(\delta,k)=\eta^{-1}W_\delta\eta^{-1}F(k)$, denoting by $\cA:=F(i\alpha)$, and $\cB:=\eta^{-1}W_\delta\eta^{-1}$, we have 
$$\langle\psi_1\eta|T(\delta,k)^n\eta^{-1}W_\delta\psi_1\rangle=\langle W_\delta \psi_1|\eta^{-1} \cA \cB \cA ...\,\cA \cB\cA\eta^{-1}W_\delta\psi_1\rangle,$$ which is real because of the self-adjointness of $\cA$ and $\cB$.
\end{proof}

\begin{proof}[Proof of Theorem \ref{T1}]
The meromorphic extension of the resolvent to $\cD$ was obtained in Proposition \ref{prhdelta}. Let us prove that there is only one pole inside $\cD$. Consider a circle $\gamma$ within $\mathcal{D}$ and take the analytic functions $f(k):=k+\nu_{\delta}(k)$ and  $g(k):=k $.  Then, since the radius of $\gamma$ is fixed, taking $\delta$ small and using  Lemma \ref{le22may18} we have that $$|f-g|<|g|$$ on $\gamma$. Thus, Rouche's Theorem implies the existence of a unique solution of the equation $k+\nu_{\delta}(k)=0$ inside $\gamma$. 
 
The behavior \eqref{2_7_18} follows immediately from Lemma \ref{le22may18}.  The multiplicity of $k(\delta)$ is the rank of the residue of $\eta R_\delta(k)\eta$, which from \eqref{12feb19} is giving by the rank of $(A_{-1}+k(\delta)F(k(\delta)))\Pi_\delta$. By the definition of $\Pi_\delta$ the rank of $(A_{-1}+k(\delta)F(k(\delta)))\Pi_\delta$ is at most one. By \eqref{2_7_18}, for $\delta $ small, this rank is equal to the rank of $A_{-1}\Pi_\delta$ which is one if $\nu_\delta(k)\neq0$. If $\nu_\delta(k)=0$, in the proof of Proposition \ref{prhdelta}, the equality  \eqref{29sept17a} becomes $({\rm Id}+K)^{-1}=\Pi_{\delta}^{\perp}+ \Pi_{\delta}={\rm Id}$. Therefore \eqref{4sept17e} implies that $k=0$ is the unique pole of the resolvent of $H_\delta$, and has multiplicity one by Proposition \ref{mero_part}. 

 Finally, to see that $k(\delta)$ is purely imaginary, consider the real-valued function $t(x)=i( i x+\nu_\delta(ix))$, $x \in \R$ (have in mind Lemma \ref{lereal_part}). Applying Lemma \ref{le22may18}, if $\mu_1\neq0$, we have that $t(0)$ and $t(i2\mu_1\delta)$ have different signs, for $\delta$ small. In consequence, $t$ has a root of modulus smaller than $2|\mu_1 \delta|$. By the uniqueness proved above, this root is $i k(\delta)$. If $\mu_1=0$, again by Lemma \ref{le22may18}, $\nu_\delta(\pm\delta)=O(\delta^2)$, therefore $t(\delta)$ and $t(-\delta)$ have different signs for $\delta$ small. Arguing as before we conclude the proof of the Theorem. \end{proof}

\section{Proof of Theorem \ref{T2} ( $\beta=0$)}\label{beta_zero}\label{S5}
\begin{proof}[Proof of Theorem \ref{T2}] First note that from \eqref{21jan19e}, we have that for $k\in\C^{++}$
\bel{21jan19}
(H_0-\cE_n-k^2)^{-1}=\sum_{q \geq 1} (D^2_{3}+(\cE_q-\cE_n)-k^2)^{-1}\otimes\pi_q.\ee
Thus, the meromorphic extension of $\eta(H_0-\cE_n-k^2)^{-1}\eta$ is explicit. Indeed, from \eqref{21jan19}, the operator $\sum_{q > n} (D^2_{3}+(\cE_q-\cE_n)-k^2)^{-1}\otimes\pi_q=\one_{[\cE_{n+1},\infty)}(H_0)$
has an obvious analytic extension. 
Further, for $q\leq n$ the kernel of $(D^2_{3}+(\cE_q-\cE_n)-k^2)^{-1}$ is explicitly given by
$$\frac{i}{2\sqrt{k^2+\cE_n-\cE_q}}e^{i|s-s'|\sqrt{k^2+\cE_n-\cE_q}}.$$
For $q<n$, these kernels define operators analytically dependent on $k$ in a neighborhood of $0$. For the case $q=n$, the operator $\mathsf{A}_n(k):=\eta(D^2_{3}-k^2)^{-1}\eta\otimes\pi_n$
has a meromorphic extension with a unique pole at zero, which has multiplicity one.
In conclusion, denoting by $B_{n}(k):=\sum_{q\neq n} (D^2_{3}+(\cE_q-\cE_n)-k^2)^{-1}\otimes\pi_q,$ we have the meromorphic extension $$\eta(H_0-\cE_n-k^2)^{-1}\eta=\tfrac{1}{k}\mathsf{A}_{n,-1}+F_n(k),$$ where $\mathsf{A}_{n,-1}$ has integral kernel 
$\frac{i}{2}\eta(s')\eta(s)\psi_n(t')\psi_n(t)$
and $F_n(k)=\mathsf{A}_{n}(k)-\tfrac{1}{k}\mathsf{A}_{n,-1}+B_n(k)$ is analytic in a neighborhood of zero.

Next, the proof of the extension of the resolvent of $(H_\delta-\cE_n-k^2)^{-1}$ to a neighborhood of zero $\mathcal{D}$ works mutatis mutandis the proof of Proposition \ref{prhdelta}. Also, since we are assuming that $\cE_n$ is a non-degenerate eigenvalue of $\Delta_\omega$, the proof of the uniqueness of the pole in $\cD$ works similarly as well. 

 Recalling that if $\beta=0$ we have $m_\beta=1$, we define $$\Phi_{n,\delta}:= \frac{i}{2 } 
\eta^{-1}W_\delta \psi_n, \quad T_n(\delta,k):=\eta^{-1}W_\delta\eta^{-1}F_n(k)$$and $$ 
 \nu_{n,\delta(k)}:=\langle \eta\otimes\psi_n|({\rm Id}+ T_n(\delta,k))^{-1}\Phi_{n,\delta}\rangle. $$
 
As in Lemma \ref{le22may18} we have that 
$\nu_{n,\delta}(k)=-i\mu_{2,n}\delta^2+k\delta^2g_\delta(k)+O(\delta^3)$ (the term of order $\delta$ is zero due to  $\beta=0$ ). To compute $\mu_{2,n}$ we need to consider the second order term of 
 $$\ba{ll}\displaystyle{-\frac{1}{2}\langle \psi_n\eta|\eta^{-1} W_\delta \psi_n\rangle+
\frac{1}{2}\langle \psi_n\eta|T_n(\delta,0)\eta^{-1} W_\delta \psi_n\rangle}=-\frac{1}{2}\langle \psi_n|W_\delta \,\psi_n\rangle+\frac{1}{2} \langle  W_\delta \psi_n|\eta^{-1} F(0) \eta^{-1}W_\delta \psi_n\rangle,\ea$$
 where the first inner product in the right hand side needs to be understand as a duality. First, from Lemma \ref{le_app} in the Appendix 
\bel{20jan19e}-\langle \psi_n|W_\delta \,\psi_n\rangle
=\left(\left\|\frac{\kappa}{2}\right\|^2-\left\|\frac{\dot\kappa}{2}t_2\psi_n+(\tau-\epsilon)\pf\psi_n\right\|^2\right)\delta^2+O(\delta^3).\ee

Now, let us consider the term given by $\langle  W_\delta \psi_n|\eta^{-1} F(0) \eta^{-1}W_\delta \psi_n\rangle.$
The integral kernel of the operator $\left(\mathsf{A}_{n}(k)-\tfrac{1}{k}\mathsf{A}_{n,-1}\right)\big|_{k=0}$ is 
$$-\frac{|s-s'|}{2} \eta(s')\eta(s)\psi_n(t')\psi_n(t).$$
Also, note that from the proof of Lemma \ref{le_app} in the Appendix we obtain \bel{20jan19b}W_\delta \psi_n=\Big(-\frac{1}{2} \ddot{\kappa} t_2\psi_n+(\dot{\epsilon}-\dot{\tau})\pf\psi_n\Big)\delta+O(\delta^2).\ee
In consequence, since $\psi_n$ is orthogonal to $\pf\psi_n$, using \eqref{20jan19b} we obtain
\bel{20jan19}\ba{ll}
\langle W_\delta \psi_n|\left(\mathsf{A}_{n}(k)-\tfrac{1}{k}\mathsf{A}_{n,-1}\right)\big|_{k=0}W_\delta \psi_n\rangle
&=-\delta^2\displaystyle{\ \int_{\R^{ 2}}\frac{|s-s'|}{8} \ddot\kappa(s')\ddot\kappa(s)ds'ds\left(\int_\omega t_2\psi_n(t)^{2}dt\right)^2+O(\delta^3)}\\[1em]&=\displaystyle{\frac{\delta^2}{4} \|\dot\kappa\|^{2}\,\langle \psi_n|t_2\psi_n\rangle^2+O(\delta^3)}\ea
\ee
Further, we compute 
\bel{20jan19a}
\langle  W_\delta \psi_n|\eta^{-1} B_n(0)\eta^{-1}W_\delta \psi_n\rangle
\ee
$$
 =\delta^2\left\langle\big(\frac{1}{2} \ddot{\kappa} t_2\psi_n+(\dot{\tau}-\dot{\epsilon})\pf\psi_n\big)\Big|\sum_{q \neq n} \big((D^2_{s}+(\cE_q-\cE_n))^{-1}\otimes\pi_q\big)\big(\frac{1}{2} \ddot{\kappa} t_2\psi_n+(\dot{\tau}-\dot{\epsilon})\pf\psi_n\big)\right\rangle+O(\delta^3)
$$
$$
 =\delta^2\left\langle\big(\frac{1}{2} \dot{\kappa} t_2\psi_n+({\tau}-{\epsilon})\pf\psi_n\big)\Big|\sum_{q \neq n} \big(I_s\otimes\pi_q\big)\big(\frac{1}{2} \dot{\kappa} t_2\psi_n+({\tau}-{\epsilon})\pf\psi_n\big)\right\rangle+O(\delta^3)
$$
$$
 -\delta^2\left\langle\big(\frac{1}{2} \dot{\kappa} t_2\psi_n+({\tau}-{\epsilon})\pf\psi_n\big)\Big|\sum_{q \neq n} (\cE_q-\cE_n)\big((D^2_{s}+(\cE_q-\cE_n))^{-1}\otimes\pi_q\big)\big(\frac{1}{2} \dot{\kappa} t_2\psi_n+({\tau}-{\epsilon})\pf\psi_n\big)\right\rangle,
$$
where we have integrated by parts in the variable $s$ and then added and subtracted in each term the operator $(\cE_q-\cE_n)\big((D^2_{s}+(\cE_q-\cE_n))^{-1}\otimes\pi_q\big)$. 

The first term in the last equality is equal to 
$$
 \delta^2\|\frac{\dot\kappa}{2}\|^2\sum_{q \neq n} |\langle t_2\psi_n|\psi_{q}\rangle|^2
+\delta^2\|\tau-\varepsilon\|^2\sum_{ q \neq n} |\langle \psi_{ q}|\pf\psi_n\rangle|^2+\delta^2 2 \langle \frac{\dot\kappa}{2}|\tau-\varepsilon\rangle\sum_{ q \neq n} \langle \psi_{ q}|t_2\psi_n\rangle\langle \psi_{ q}|\pf\psi_n\rangle
$$
\bel{20jan19d}=\delta^2\|\frac{\dot\kappa}{2}\|^2\left(\|t_2\psi_n\|^2-\langle \psi_n|t_2\psi_n\rangle\right)+\delta^2\|\tau-\varepsilon\|^2\|\pf\psi_n\|^2+\delta^2 2\langle\frac{\dot\kappa}{2}| \tau-\varepsilon\rangle \langle t_2\psi_n|\pf\psi_n\rangle.
\ee
$$=\delta^2\left(\left\|\frac{\dot\kappa}{2}t_2\psi_n+(\tau-\epsilon)\pf\psi_n\right\|^2-\langle \psi_n|t_2\psi_n\rangle\|\frac{\dot\kappa}{2}\|^2\right)$$
Putting together \eqref{20jan19e}, \eqref{20jan19}, \eqref{20jan19a}, \eqref{20jan19d} we get 
$$\ba{lll}\mu_{2,n}&=&\displaystyle{\frac{1}{8}||\kappa||^2-\frac{1}{2}\sum_{q\neq n}(\mathcal{E}_{q}-\mathcal{E}_{n}) \langle\psi_{q}|\partial_{\varphi}\psi_{n}\rangle^2 \langle (\tau-\varepsilon)|(D_{s}^2+\mathcal{E}_{q}-\mathcal{E}_{n})^{-1}(\tau-\varepsilon)\rangle}
\\&&\displaystyle{-\frac{1}{8}\sum_{q\neq n}(\mathcal{E}_{q}-\mathcal{E}_{n}) \langle\psi_{q}|t_2\psi_{n}\rangle^2 \langle\dot{\kappa}| (D_{s}^2+\mathcal{E}_{q}-\mathcal{E}_{n})^{-1}\dot{\kappa}\rangle}
\\&&\displaystyle{-\frac{1}{2}\sum_{q\neq n}(\mathcal{E}_{q}-\mathcal{E}_{n}) \langle\psi_{q}|t_2\psi_{n}\rangle \langle\psi_{q}|\partial_{\varphi}\psi_{n}\rangle\langle\tau-\varepsilon| (D_{s}^2+\mathcal{E}_{q}-\mathcal{E}_{n})^{-1}\dot{\kappa}\rangle}.\ea$$
To finish, in the second sum integrate by parts in the variable $s$ again, obtaining $$ \sum_{n\geq2}(\mathcal{E}_{q}-\mathcal{E}_{n}) \langle\psi_{q}|t_2\psi_{n}\rangle^2 \langle\dot{\kappa}| (D_{s}^2+\mathcal{E}_{q}-\mathcal{E}_{n})^{-1}\dot{\kappa}\rangle$$$$
=\|\kappa\|^2\sum_{n\geq1}(\mathcal{E}_{q}-\mathcal{E}_{n}) \langle\psi_{q}|t_2\psi_{n}\rangle^2 -\sum_{n\geq2}(\mathcal{E}_{q}-\mathcal{E}_{n})^2 \langle\psi_{q}|t_2\psi_{n}\rangle^2 \langle{\kappa}| (D_{s}^2+\mathcal{E}_{q}-\mathcal{E}_{n})^{-1}{\kappa}\rangle$$
$$
=\|\kappa\|^2\langle(-\Delta_\omega-\cE_1)t_2\psi_n|t_2\psi_n\rangle -\sum_{n\geq2}(\mathcal{E}_{q}-\mathcal{E}_{n})^2 \langle\psi_{q}|t_2\psi_{n}\rangle^2 \langle{\kappa}| (D_{s}^2+\mathcal{E}_{q}-\mathcal{E}_{n})^{-1}{\kappa}\rangle$$
$$
=\|\kappa\|^2\langle-2\partial_2\psi_n|t_2\psi_n\rangle -\sum_{n\geq2}(\mathcal{E}_{q}-\mathcal{E}_{n})^2 \langle\psi_{q}|t_2\psi_{n}\rangle^2 \langle{\kappa}| (D_{s}^2+\mathcal{E}_{q}-\mathcal{E}_{n})^{-1}{\kappa}\rangle$$
$$
=\|\kappa\|^2 -\sum_{n\geq2}(\mathcal{E}_{q}-\mathcal{E}_{n})^2 \langle\psi_{q}|t_2\psi_{n}\rangle^2 \langle{\kappa}| (D_{s}^2+\mathcal{E}_{q}-\mathcal{E}_{n})^{-1}{\kappa}\rangle.$$
this will finally give us \eqref{19dic18a}. \end{proof}

\section{Discussion}\label{comments}

In the statement of Theorems \ref{T1} and \ref{T2} appear several different quantities that determine the constants $\mu_1$ and $\mu_2$. In order to understand better the influence of the curvature and twisting on these constants, we will consider some different cases separately and make some comments about them. 

\subsection{Theorem \ref{T1}, $\kappa=0$} I this situation  we obtain $\mu_1= -\frac{\beta }{\sqrt{m_\beta}}||\partial_\varphi \psi_1||^2\int_\R\varepsilon\,ds,$
which implies that the existence of a bound state or an anti-bound state near $\cE_1$ depends on the sign of $\int_\R\varepsilon(s)\,ds.$ In particular, using that 
$$ 
\delta \varepsilon = \dot{\theta_\delta} - \beta= \frac{\dot{\theta_\delta}^2-\beta^2}{\dot{\theta_\delta}+\beta} = \frac{\dot{\theta_\delta}^2-\beta^2}{2\beta} + O(\delta^2),
$$
it can be easily seen that 
$$k(\delta)=  \frac{-i}{2\sqrt{m_\beta}} ||\partial_\varphi \psi_1||^2\int_\R(\dot{\theta_\delta}^2-\beta^2)\,ds +O(\delta^2),$$  
in agreement with the articles \cite{ExKov05,BrHaKr15}. In particular in \cite{ExKov05}, with a slightly different notation, it is shown that if $\int_\R(\dot{\theta}^2-\beta^2)\,ds\leq 0$ then we obtain eigenvalues while in \cite{BrHaKr15} they prove a Hardy inequality under the assumption $\beta\epsilon\geq 0$ (that correspond to $\int_\R(\dot{\theta}^2-\beta^2)\,ds>0$). What we have shown is that in the last case we have an antibound state.  Moreover, it is possible to compute the term of order $\delta^2$  if we assume for example that $\varepsilon$ is odd. In this case $\mu_1=0$ and
$$\mu_2=\int_\R  \sum_{q=1} \frac{p^2}{E_q(p)-\cE_n}
|\langle \Psi_n(p)|2\beta \hat{E}\pf^2\psi_1-\hat{\varepsilon}\pf\psi_1\rangle|^2dp-\|\pf\psi_1\|^2\int_\R |\hat{\varepsilon}(p)|^2dp,
$$
where $E(s)=\int_{-\infty}^s\varepsilon(r)dr$. 
 However, we were not able to find the sign of $\mu_2$, which would help us to see  whether  \Bk the condition $\int_\R(\dot{\theta}^2-\beta^2)\,ds\geq 0$ is optimal or order to obtain eigenvalues below the bottom of the essential spectrum.

\subsection{Theorem \ref{T1}, $\int\tau-\varepsilon=0$} In this case $\mu_1 =-\frac{\beta^2}{2 \sqrt{m_{\beta} }}
 \int_{\R} \kappa(s) F(s)d s,$
which will give us bound states or anti-bound states depending on the interaction between $\kappa$ and $F$. If the function $F$ is not zero everywhere, we can construct a waveguide such the interaction term is positive (respectively, negative) by choosing $\kappa$ with support when $F$ is positive (respectively, negative).  This implies in particular that bending does not act necessarily as an attractive potential for periodically twisted tubes. 

If $\int\tau-\varepsilon\neq0$, $\kappa\neq0$, we can play with the constant $\beta$ to get bound or anti-bound states. For example if $\beta$  is small the main contribution will come from $\int\tau-\varepsilon$, or even we can have $\mu_1=0$.

\subsection{Theorem \ref{T2}} Unless the previous case, for the non-periodically twisted deformed tube, it is known that bending acts as an attractive potential and twisting as a repulsive potential (see \cite{Kre07} for a review). In consequence, when only bending is considered, we know that there exists at least one discrete eigenvalue below $\cE_1$ \cite{DuEx95,Gru04}. In that case, our formulas \eqref{19dic18}-\eqref{19dic18a} show that we have an eigenvalue of $H_\delta$ of the form
$$\cE_1-\displaystyle{\frac{1}{64}\left(\sum_{q\neq1}(\mathcal{E}_{q}-\mathcal{E}_{1})^2 \langle\psi_{q}|t_2\psi_{1}\rangle^2 \langle{\kappa}| (D_{s}^2+\mathcal{E}_{q}-\mathcal{E}_{1})^{-1}{\kappa}\rangle\right)^2}\,\delta^4+O(\delta^5),$$
which coincides with \cite[Theorem 2.2]{Gru04}. 

With our result we can say more than that, for instance taking   $n=2$,  Theorem \ref{T2} yields  that the resonance near  $\cE_2$ satisfies 
$$k(\delta) = -\displaystyle{\frac{i}{8}\sum_{q\neq2}(\mathcal{E}_{q}-\mathcal{E}_{2})^2 \langle\psi_{q}|t_2\psi_{2}\rangle^2 \langle{\kappa}| (D_{s}^2+\mathcal{E}_{q}-\mathcal{E}_{2})^{-1}{\kappa}\rangle}\,\delta^2+O(\delta^3).$$
The real part of the $\delta^2$ term  can come only from $ i\langle{\kappa}| (D_{s}^2+\mathcal{E}_{1}-\mathcal{E}_{2})^{-1}{\kappa}\rangle$,
for which it is easy to give conditions such that this is not equal to zero. In this situation we will have a resonance  which is not an anti-bound state. A related interesting question is if it is possible to choose $\kappa, \tau$ and $ \varepsilon$ such that instead of a resonance we produce an embedded eigenvalue.

In conclusion, what we have shown in \eqref{19dic18}-\eqref{19dic18a} is how the two deformations quantitatively compete to produce resonances, in the regime of a weak coupling constant. The third term in \eqref{19dic18a} appear as an interaction between the two effects.

\appendix 
\section{Some explicit expansions}\label{ap}
\subsection{The perturbation as a second order differential operator}
\label{ap1}
In this appendix we give some explicit computations that are straightforward. 
For simplicity we set $\xi(s):=\tau(s)-\epsilon(s)-\beta$. 
Also it is easy to see that $h(s,t)=1-\kappa(s)(t_2\cos(\theta(s))+t_3\sin(\theta(s)))$ satisfies
\begin{equation*}
(\partial_\varphi h)(s,t)=\kappa(s)(t_3\cos(\theta(s))-t_2\sin(\theta(s)))=:\tilde{h}(s,t)\ \text{ and } 
\partial_\varphi \tilde{h}=1-h \ .
\end{equation*}
With this we can start computing an expression for $W$. Here we systematically use the notation $\dot{f}=\partial_s f$ and $\tilde{f}=\partial_\varphi f$. By definition we have:
\begin{equation*}
W=-\frac{\kappa^2}{4h^2}-(h^{-\frac12}(\partial_s+\xi\partial_\varphi)h^{-\frac12})^2+\partial_s^2-2\beta\partial_s\partial_\varphi+\beta^2\partial_\varphi^2
\end{equation*}
Noticing that
\begin{equation*}
h^{-\frac12}(\partial_s+\xi\partial_\varphi)h^{-\frac12}=\frac1h\partial_s+\frac\xi h\partial_\varphi-\frac{\dot{h}+\xi\tilde{h}}{2h^2} \ ,
\end{equation*}
we get 
\begin{align*}
&\left(h^{-\frac12}(\partial_s+\xi\partial_\varphi)h^{-\frac12}\right)^2=\\
&\frac1{h^2}\partial_s^2-\frac{\dot{h}}{h^3}\partial_s+\frac\xi {h^2}\partial_s\partial_\varphi+\left(\frac{\dot{\xi}}{h^2}-\frac{\xi\dot{h}}{h^3}\right)\partial_\varphi-\frac{\dot{h}+\xi\tilde{h}}{2h^3}\partial_s-\frac{\ddot{h}+\dot{\xi}\tilde{h}+\xi\dot{\tilde{h}}}{2h^3}+\frac{\dot{h}^2+\xi\tilde{h}\dot{h}}{h^4} \\
&+\frac\xi {h^2}\partial_s\partial_\varphi-\frac{\xi\tilde{h}}{h^3}\partial_s +\frac{\xi^2}{h^2}\partial_\varphi^2-\frac{\xi^2\tilde{h}}{h^3}\partial_\varphi-\frac{\xi\dot{h}+\xi^2\tilde{h}}{2h^3}\partial_\varphi-\frac{\xi\dot{\tilde{h}}+\xi^2(1-h)}{2h^3}+\frac{\xi\dot{h}\tilde{h}+\xi^2\tilde{h}^2}{h^4}\\
&-\frac{\dot{h}+\xi\tilde{h}}{2h^3}\partial_s-\frac{\xi\dot{h}+\xi^2\tilde{h}}{2h^3}\partial_\varphi+\frac{\dot{h}^2}{4h^4}+\frac{\xi\dot{h}\tilde{h}}{2h^4}+\frac{\xi^2\tilde{h}^2}{4h^4}\ .
\end{align*}

Then, writing
\begin{equation}
\label{E:devW}
W=f_{0,0}+f_{1,0}\partial_s+f_{0,1}\partial_\varphi+f_{1,1}\partial_s\partial_\varphi+f_{2,0}\partial_s^2+f_{0,2}\partial_\varphi^2
\end{equation}
we obtain
\begin{align*}
f_{0,0}&=-\frac{\kappa^2}{4h^2}+\frac{\ddot{h}}{2h^3}+\frac{\dot\xi\tilde{h}}{2h^3}+\frac{\xi\dot{\tilde{h}}}{h^3}-\frac{5\dot{h}^2}{4h^4}-\frac{5\xi\tilde{h}\dot{h}}{2h^4}+\frac{\xi^2(1-h)}{2h^3}-\frac{5\xi^2\tilde{h}^2}{4h^4}\ ;\\
f_{1,0}&=\frac{2\dot{h}}{h^3}+\frac{2\xi\tilde{h}}{h^3}\ ;\\
f_{0,1}&=\frac{2\xi\dot{h}}{h^3}+\frac{2\xi^2\tilde{h}}{h^3}-\frac{\dot{\xi}}{h^2}\ ;\\
f_{1,1}&=-2\beta-\frac{2\xi}{h^2}\ ;\\
f_{2,0}&=1-\frac1{h^2}\ ;\\
f_{0,2}&=\beta^2-\frac{\xi^2}{h^2}\ .
\end{align*}

\subsection{Asymptotics of the coefficient in the perturbative regime}\label{ap2}
 We are now interested in  the  asymptotic behavior of $\langle\eta\otimes\psi_n|\eta^{-1}W_\delta\psi_n\rangle=\langle\psi_n|W_\delta\psi_n\rangle$ as $\delta\to 0$. For this we need to make appear the $\delta$-dependence in the previous expressions. We set $\xi_\delta(s)=\delta\tau(s)-\delta\epsilon(s)-\beta$ and introduce the auxiliary function $\Xi_\delta(s):=\xi_\delta(s)+\beta=\delta\tau(s)-\delta\epsilon(s)=:\delta\Xi(s)$. Furthermore, setting $E(s)=\int_{-\infty}^s\epsilon(s)ds$ we get the expression $\theta_\delta(s)=\beta s+\delta E(s)$. Before studying $\langle \psi_n|W_\delta\psi_n\rangle$ we need the asymptotic behavior of $h_\delta(s,t)=1-\delta\kappa(s)(t_2\cos(\theta_\delta(s))+t_3\sin(\theta_\delta(s)))$ given by

\bel{hdelta}
h_\delta=1+\delta g_1+\delta^2 g_2+O(\delta^3)\ee where
\bel{g1g2}\begin{aligned}
g_1(s,t)&=-\kappa(s)(t_2\cos(\beta s)+t_3\sin(\beta s))\ ,\\
g_2(s,t)&=-\kappa(s)E(s)(-t_2\sin(\beta s)+t_3\cos(\beta s))\ .\end{aligned}\ee
We will also need the fact that:
\begin{align*}
h_\delta^{-1}&=1-g_1\delta+(g_1^2-g_2)\delta^2+\cO(\delta^3)\ ;\\
h_\delta^{-2}&=1-2g_1\delta+(3g_1^2-2g_2)\delta^2+\cO(\delta^3)\ ;\\
h_\delta^{-n}&=1-ng_1\delta+O(\delta^2)\ .
\end{align*}
The relation $\partial_\varphi h=\tilde{h}$ gives the corresponding asymptotic for $\tilde{h}_\delta$
\begin{equation*}
\tilde{h}_\delta=\tilde{g}_1\delta+\tilde{g}_2\delta^2+O(\delta^3),
\end{equation*}
where $\tilde{g}_i=\partial_\varphi g_i$. 

 \begin{lemma}\label{le_app}
 Under the assumptions of decay of $\kappa,\tau,\varepsilon $ and its derivatives (see \eqref{hypeps}) we have the following asymptotic:
 \begin{equation}
\langle \psi_n|W_\delta\psi_n\rangle=\breve{\mu}_{1,n}\delta+\breve{\mu}_{2,n}\delta^2+O(\delta^3) .
 \end{equation}
The constant $\breve{\mu}_{1,n}$ is given by
 \begin{equation}
\breve{\mu}_{1,n}=2\beta\|\partial_\varphi\psi_n\|^2 \int_\R \varepsilon-\tau +\beta^2\int_{\R\times\omega} \kappa (\tfrac{1}{2}|\psi_n|^2+2|\partial_\varphi\psi_n|^2)\vartheta,
 \end{equation}
 where  $\vartheta(s,t)$ is given by
 \bel{D:vartheta}
\vartheta(s,t):=(t_2\cos(\beta s)+t_3\sin(\beta s)).
\ee
 
 Assume moreover that $\beta=0$. Then $\breve{\mu}_{1,n}=0$ and $\breve{\mu}_{2,n}$ is given by
 \begin{equation}
\breve{\mu}_{2,n}=-\left\|\frac\kappa2\right\|^2+\left\|\frac{\dot{\kappa}}2t_2\psi_n+(\tau-\varepsilon)\partial_\varphi \psi_n\right\|^2.
\end{equation}
 \end{lemma}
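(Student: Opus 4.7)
The first observation is that $\psi_n$ depends only on the cross-section variable $t$, so $\partial_s\psi_n\equiv 0$, and in the explicit decomposition \eqref{E:devW} only the three ``potential-like'' terms survive:
\[
\langle\psi_n|W_\delta\psi_n\rangle = \int_{\R\times\omega}\overline{\psi_n(t)}\bigl(f_{0,0}+f_{0,1}\partial_\varphi+f_{0,2}\partial_\varphi^2\bigr)\psi_n(t)\,ds\,dt,
\]
where the $f_{0,j}$ are evaluated at the scaled data; in particular $\xi_\delta = \delta\Xi-\beta$ with $\Xi := \tau-\varepsilon$. The plan is to Taylor-expand these three coefficients in $\delta$ using \eqref{hdelta}--\eqref{g1g2} and the induced expansions for $\dot h_\delta$, $\tilde h_\delta$, $\dot{\tilde h}_\delta$, and $h_\delta^{-n}$, and then to extract the $\delta^1$- and $\delta^2$-coefficients of the resulting integral.

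For $\breve\mu_{1,n}$, a direct expansion gives the $\delta^1$-parts of the three coefficients as $2\beta\Xi + 2\beta^2 g_1$ (for $f_{0,2}$), $-2\beta\dot g_1+2\beta^2\tilde g_1-\dot\Xi$ (for $f_{0,1}$), and $\tfrac12\ddot g_1$ (for $f_{0,0}$). I would then simplify the resulting integrals by three elementary tools: (i) the skew-adjointness of $\partial_\varphi$ on $H_0^1(\omega)$ combined with the real-valuedness of $\psi_n$, giving $\int_\omega\psi_n\partial_\varphi\psi_n\,dt=0$ and $\int_\omega\overline{\psi_n}\partial_\varphi^2\psi_n\,dt = -\|\partial_\varphi\psi_n\|_{L^2(\omega)}^2$; (ii) the explicit formulas $g_1 = -\kappa\vartheta$ and $\dot g_1 = -\dot\kappa\vartheta + \beta\kappa\,\partial_\varphi\vartheta$, where $\partial_\varphi\vartheta(s,t) = t_2\sin(\beta s) - t_3\cos(\beta s)$; (iii) integration by parts in $s$, the decay of $\kappa,\tau,\varepsilon$ and of their derivatives removing all boundary contributions. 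The $\dot\Xi\partial_\varphi$ piece is killed by (i); the $\ddot g_1$ piece, after $s$-integration, vanishes by (iii); and the remaining pieces collapse into the announced formula for $\breve\mu_{1,n}$.

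When $\beta=0$ the formulas simplify: $\xi_\delta = \delta\Xi$ is already $O(\delta)$, $\vartheta = t_2$, $g_1=-\kappa t_2$, $\tilde g_1 = \kappa t_3$, $g_2 = -\kappa E\,t_3$. Every $\delta^1$-term listed above either carries a factor of $\beta$ or is killed by (i) and (iii), so $\breve\mu_{1,n}|_{\beta=0}=0$. For $\breve\mu_{2,n}$ the $\delta^2$-coefficient must be collected from each $f_{0,j}$: the direct potential $-\kappa^2/4$ in $f_{0,0}$ yields $-\|\kappa/2\|^2$; the $-\Xi^2\partial_\varphi^2$ piece in $f_{0,2}$ yields $\|\Xi\partial_\varphi\psi_n\|^2$ after IBP in $\varphi$; the remaining $s$-derivative pieces in $f_{0,0}$, namely $\tfrac12\dot\Xi\tilde g_1 + \Xi\dot{\tilde g}_1 + \tfrac12(\ddot g_2 - 3g_1\ddot g_1) - \tfrac54\dot g_1^2$, together with $2(\Xi\dot g_1 + \dot\Xi g_1)\partial_\varphi$ from $f_{0,1}$, reorganize via repeated integration by parts in $s$ (using identities such as $\dot E = \varepsilon$ and $\int_\R\ddot\kappa E\,ds = \int_\R\kappa\dot\varepsilon\,ds$) into the cross term $\int\dot\kappa\,\Xi\,t_2\psi_n\partial_\varphi\psi_n\,ds\,dt$ and the diagonal $\|(\dot\kappa/2)t_2\psi_n\|^2$. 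Summing exactly completes the square into the announced form.

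The main obstacle is precisely this last step: roughly a dozen terms are generated at second order, and identifying the right chain of integrations by parts that collapses them into the completing-the-square form requires careful sign-tracking and recognizing which $s$-integrals vanish after using decay of $\kappa,\tau,\varepsilon$ and their derivatives. Conceptually nothing deep is needed beyond the three reductions (i)--(iii); the tedium is irreducible but routine.
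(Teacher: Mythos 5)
Your overall strategy is exactly the paper's: since $\partial_s\psi_n=0$ only $f_{0,0},f_{0,1},f_{0,2}$ contribute, one Taylor-expands them via \eqref{hdelta}--\eqref{g1g2}, and reduces by skew-adjointness of $\partial_\varphi$, the explicit form of $g_1$, and integration by parts in $s$ using the decay \eqref{hypeps}. However, there is a concrete error in your first-order expansion that breaks the computation of $\breve{\mu}_{1,n}$. The $\delta^1$-coefficient of $f_{0,0}$ is not $\tfrac12\ddot g_1$ but
\[
\tfrac12\ddot g_1-\tfrac{\beta^2}{2}g_1-\beta\dot{\tilde g}_1,
\]
the extra terms coming from $\frac{\xi^2(1-h)}{2h^3}$ and $\frac{\xi\dot{\tilde h}}{h^3}$ with $\xi_\delta=\delta\Xi-\beta$. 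The term $-\beta\dot{\tilde g}_1$ is harmless (a total $s$-derivative, so it pairs to zero with $\psi_n^2$), but $-\tfrac{\beta^2}{2}g_1=\tfrac{\beta^2}{2}\kappa\vartheta$ is precisely the source of the contribution $\tfrac{\beta^2}{2}\int\kappa\vartheta|\psi_n|^2$ in the stated formula. With the coefficients as you list them, the terms you retain produce only
\[
2\beta\|\partial_\varphi\psi_n\|^2\int_\R(\varepsilon-\tau)+2\beta^2\int_{\R\times\omega}\kappa\,\vartheta\,|\partial_\varphi\psi_n|^2,
\]
so the claimed ``collapse into the announced formula'' cannot happen: the $\tfrac12|\psi_n|^2$ piece of $\breve{\mu}_{1,n}$ is unreachable. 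You need to restore the full expansion of $f_{0,0}$ before asserting the result.

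A secondary, more minor imprecision: in the $\beta=0$ second-order computation, the $f_{0,1}$ contribution $2(\Xi\dot g_1+\dot\Xi g_1)=2\partial_s(\Xi g_1)$ integrates to zero in $s$, so it does not feed the cross term; the cross term $-\int\Xi\dot g_1\,\psi_n\partial_\varphi\psi_n$ arises instead from the $\Xi\dot{\tilde g}_1$ piece of $f_{0,0}$ after an integration by parts in $\varphi$ (combined with $\tfrac12\dot\Xi\tilde g_1$ via an $s$-integration by parts). Your phrasing lumps these together, and while the aggregate bookkeeping can be made to work, as written it misattributes where the cross term comes from. The completion of the square at the end, and the identification of the diagonal term with $\tfrac14\int\dot g_1^2\psi_n^2$ coming from $-\tfrac32 g_1\ddot g_1-\tfrac54\dot g_1^2$, otherwise match the paper's argument.
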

\begin{proof}

In order to compute the differential operator in the r.h.s.\ of \eqref{E:devW} applied to $\psi_n$, since $\partial_s \psi_n=0$, we only need the expansions of $f_{0,j}$ for $j\in\{0,1,2\}$:

\begin{align*}
f_{0,0}(\delta)=&\left(-\frac{\kappa^2}4\delta^2 +O(\delta^3)\right)+\left(\frac{\ddot{g}_1}2\delta+\frac{\ddot{g}_2-3g_1{\ddot{g}_1}}2\delta^2+O(\delta^3) \right)+\left(\frac{\dot{\Xi}\tilde{g}_1}2\delta^2+O(\delta^3) \right)\\
&+\left(-\beta\dot{\tilde{g}}_1\delta+(3\beta g_1\dot{\tilde{g}}_1-\beta\dot{\tilde{g}}_2+\dot{\tilde{g}}_1\Xi)\delta^2+O(\delta^3) \right)+\left(\frac{-5\dot{g}_1^2}4\delta^2+O(\delta^3) \right)\\
&+\left(\frac{5\beta \dot{g}_1\tilde{g}_1}2\delta^2+O(\delta^3) \right)+\left(\frac{-\beta^2g_1}2\delta+\frac{\beta^2(3g_1^2-g_2)+2\beta\Xi g_1}2\delta^2+O(\delta^3) \right)\\
&+\left(\frac{-5\beta^2\tilde{g}_1^2}4\delta^2+O(\delta^3) \right) 
 \end{align*}
 which gives 
 \begin{align*}
f_{0,0}(\delta)=&\left(\frac{\ddot{g}_1}2-\frac{\beta^2g_1}2-\beta\dot{\tilde{g}}_1\right)\delta+\Bigg[\frac12\left(\ddot{g}_2-3g_1{\ddot{g}_1}+\dot{\Xi}\tilde{g}_1+5\beta \dot{g}_1\tilde{g}_1+\beta^2(3g_1^2-g_2)\right)\\
&-\frac54\left( \beta^2\tilde{g}_1^2+\dot{g}_1^2\right)+\beta\Xi g_1+3\beta g_1\dot{\tilde{g}}_1-\beta\dot{\tilde{g}}_2+\Xi\dot{\tilde{g}}_1-\frac{\kappa^2}4\bigg]\delta^2+O(\delta^3).
\end{align*}
The higher order terms are
 \begin{align*}
f_{0,1}(\delta)=&\left(2\beta^2\tilde{g}_1-2\beta\dot{g}_1-\dot{\Xi}\right)\delta
\\&+\left(2\Xi\dot{g}_1+2\dot{\Xi}g_1-4\beta\Xi\tilde{g}_1-2\beta\dot{g}_2+6\beta g_1\dot{g}_1+2\beta^2\tilde{g}_2-6\beta^2g_1\tilde{g}_1)\right)\delta^2+O(\delta^3)
\end{align*}
and
$$
f_{0,2}(\delta)=(2\beta\Xi+2\beta^2g_1)\delta-\left(\Xi^2+4\beta\Xi g_1+\beta^2(3g_1^2-2g_2)\right)\delta^2+O(\delta^3)
$$

Let us now compute  $\breve{\mu}_{1,n}$ when $\beta\neq0$. Notice that for any function 
$\R\times\omega\ni (s,t)\to F(s,t)$ such that $\dot{F}(\cdot,t)$ is integrable for every $t\in\omega$, we have $\langle \dot{F}\psi_n,\psi_n\rangle=0$ for every $1\leq n\leq\infty$. Therefore, we have 
$$\langle \psi_n|f_{0,0}\psi_n\rangle=-\frac{1}{2}\beta^2\delta\int_{\R\times\omega} g_1 \psi_n^2+O(\delta^2) \ \mbox{and} \ \langle \psi_n|f_{0,1}\partial_\varphi\psi_n\rangle=2\beta^2\delta\int_{\R\times\omega} \tilde{g}_1(\partial_\varphi\psi_n) \psi_n+O(\delta^2)\ .$$
An integration by parts provides
\begin{align*}
\langle \psi_n|f_{0,2}\partial^2_\varphi\psi_n\rangle=&\int_{\R\times\omega}2\delta\beta\Xi(\partial^2_\varphi\psi_n)\psi_n+2\delta\beta^2g_1\partial^2_\varphi\psi_n \psi_n+O(\delta^2)\\
=&-\int_{\R\times\omega}2\delta\beta\Xi(\partial_\varphi\psi_n)^2+2\delta\beta^2\left(\tilde{g}_1(\partial_\varphi\psi_n) \psi_n+g_1(\partial_\varphi\psi_n)^2\right)+O(\delta^2).
\end{align*}
Combining these results we get
\begin{equation}\label{mu1}
\langle \psi_n,W_\delta\psi_n\rangle=\left(\frac{-\beta^2}2\int_{\R\times\omega} g_1 |\psi_n|^2-2\int_{\R\times\omega}(\beta\Xi +\beta^2g_1)|\partial_\varphi\psi_n|^2\right)\delta+O(\delta^2).\end{equation}
from where one easily deduces the expression for $\breve{\mu}_{1,n}$. 

Assume now that $\beta=0$. It is clear from \eqref{mu1} that $\mu_{1,n}=0$.
Let us now compute the expression of $\mu_{2,n}$. We have 
\begin{align}
\langle \psi_n |f_{0,0} \psi_n \rangle=&\delta^2\int_{\R\times\omega} \left(-\frac{\kappa^2}{4}-\frac{3}{2}g_1 \ddot{g_1}+\left(\frac12\dot{\Xi}\tilde{g_1}+\Xi\dot{\tilde{g_1}}\right)-\frac{5}{4}\dot{g_1}^2\right) \psi_n ^2+O(\delta^3)\nonumber
\\
=&\delta^2\int_{\R\times\omega} \left(-\frac{\kappa^2}{4}+\frac{1}{4}\dot{g_1}^2+\frac12{\Xi} \dot{\tilde{g_1}}\right) \psi_n ^2+O(\delta^3)\label{beta0delta2a}
\end{align}
where we have used an integration by parts in $s$ and that $\int_{\R} \dot{\Xi}\tilde{g_1}+\dot{\tilde{g_1}}\Xi=0 $. Moreover, 
\bel{beta0delta2b}\langle \psi_n | f_{0,1}\partial_\varphi\psi_n \rangle=\cO(\delta^3) \ \mbox{and} \ \langle f_{0,2}\partial^2_\varphi\psi_n |\psi_n \rangle=\delta^2\int_{\R\times\omega} \Xi^2 (\partial_\varphi \psi_n)^2+O(\delta^3).\ee
 Combining \eqref{beta0delta2a} and \eqref{beta0delta2b} and integrating by parts we deduce $\breve{\mu}_{2,n}$ by
\begin{align*}
\langle \psi_n |W_\delta \psi_n \rangle=&\delta^2\int_{\R\times\omega} -\frac{\kappa^2}{4}\psi_n ^2+\frac{1}{4}\dot{g_1}^2\psi_n ^2-{\Xi} \dot{g_1}(\partial_\varphi\psi_n)\psi_n +\Xi^2 (\partial_\varphi \psi_n)^2+O(\delta^3)\\
=&\delta^2\left(-\left\|\frac\kappa2\right\|^2+\int_{\R\times\omega}\left(\frac{\dot{g}_1\psi_n}2\right)^2-{\Xi} \dot{g_1}(\partial_\varphi\psi_n)\psi_n +(\Xi\partial_\varphi \psi_n)^2\right)+O(\delta^3)\\
=&\delta^2\left(-\left\|\frac\kappa2\right\|^2+\int_{\R\times\omega}\left(\frac{\dot{g}_1\psi_n}2-\Xi\partial_\varphi \psi_n\right)^2\right)+O(\delta^3)\ .\qedhere
\end{align*}
\end{proof}

\bibliographystyle{plain}
\bibliography{bibliopof}
\end{document}